%% file: main.tex
\documentclass[11pt,reqno]{amsart}
\usepackage{amsmath,amsthm,mathtools,amssymb,bbm}
\usepackage[shortlabels]{enumitem}
\usepackage[margin=1.4in]{geometry}
\usepackage{calrsfs}

\makeatletter
\def\blfootnote{\xdef\@thefnmark{}\@footnotetext}
\makeatother

\usepackage{tikz}
\usetikzlibrary{arrows.meta, calc, decorations.markings}
\usepackage{graphicx} 
\newcommand{\nocontentsline}[3]{}
\newcommand{\tocless}[2]{
  \bgroup
  \let\addcontentsline=\nocontentsline
  #1{#2}
  \egroup
}

\newcommand\eq[1]{\begin{align}{#1}\end{align}}
\newcommand\eqn[1]{\begin{align*}{#1}\end{align*}}
\newcommand\grad[0]{\nabla}
\let\d\relax
\newcommand\d[0]{\partial}
\let\div\relax
\DeclareMathOperator\div{div}

\DeclareMathOperator\supp{supp}

\theoremstyle{plain}
\newtheorem{thm}{Theorem}[section]
\newtheorem{lem}[thm]{Lemma}

\newtheorem{proposition}[thm]{Proposition}

\theoremstyle{definition}
\newtheorem{define}[thm]{Definition}

\theoremstyle{remark}
\newtheorem{remark}[thm]{Remark}

\title[Finite-time blow-up in an elementary model of the 3D NSE]{Finite-time blow-up in an elementary model of the 3D Navier--Stokes equations}
\author{Stan Palasek}
\address{Institute for Advanced Study, 1 Einstein Dr., Princeton, NJ 08540}
\email{palasek@ias.edu}

\begin{document}

\begin{abstract}
We demonstrate finite-time blow-up in a simple, realistic shell model of the 3D Navier--Stokes equations, equipped with ``smooth'' (i.e., rapidly decaying in frequency) initial data and forcing. Previously studied models either exhibit a turbulent cascade that regularizes the three-dimensional viscous dynamics, or rely on highly artificial interactions not transparently realized in the true Euler nonlinearity. We also treat the inviscid, unforced case and obtain singularity formation just above the energy level. We conclude with a discussion of the prospects for embedding the behavior of the dyadic model into the full Euler and Navier--Stokes equations.
\end{abstract}

\maketitle

\section{Introduction}

We are primarily concerned with the problem of global regularity for the Navier--Stokes equations in dimension three, given by
\begin{equation}\begin{aligned}\label{nse}
	\partial_tu-\nu\Delta u+u\cdot\grad u+\grad p=f\\ \div u=0
\end{aligned}\end{equation}
where $\nu>0$ is the viscosity, $u:[0,T]\times\Omega\to\mathbb R^3$ and $p:[0,T]\times\Omega\to\mathbb R$ are unknown velocity and pressure fields, and $f:[0,T]\times\Omega\to\mathbb R^3$ is a prescribed divergence-free external force. Let us take the domain $\Omega=\mathbb T^3$ for the sake of discussion. When equipped with divergence-free initial data $u^0\in C^\infty(\Omega;\mathbb R^3)$ and smooth forcing, a unique smooth solution exists locally in time, as shown originally by Leray~\cite{leray}. The global regularity problem asks whether the maximum lifespan $T_*\in(0,\infty]$ of the smooth solution is ever finite. This question appears quite far from being answered; we refer the reader to~\cite{MR3469428} for recent progress. An analogous question exists for the 3D Euler equations, corresponding to \eqref{nse} with $\nu=0$.

\subsection{Dyadic models and blow-up}\label{dyadic_models_sec}

The problem of finite-time blow-up in the Navier--Stokes equations is notoriously challenging. A plausible strategy toward gaining some insight, if not eventually resolving this question, is to consider simplified models of the equations that capture the same phenomena. A rich class of models are the shell models, also known as dyadic models; see~\cite{MR4607726} for an extensive bibliography. These model the flow of energy between modes via an infinite sequence of coupled ODEs.

In~\cite{MR3486169}, Tao put forth an ambitious strategy to potentially demonstrate a Navier--Stokes blow-up. Very roughly speaking, the program consists of two steps:

\begin{enumerate}[1., leftmargin=1cm]
\item Formulate a shell model of the 3D Navier--Stokes equations (or, if suitable\footnote{Whether blow-up of an \emph{inviscid} shell model suffices for this program depends on the severity of the singularity. For instance, Tao's inviscid blow-up would be adequate, but that of Katz--Pavlovi\'c would not.}, the Euler equations) that exhibits finite-time blow-up.
\item ``Embed'' the dyadic model from Step~1 into the full Navier--Stokes equations. (See the discussion in \S\ref{prospects_section}.)
\end{enumerate}

The remarkable achievement\footnote{Tao in~\cite{MR3486169} addresses Step~2 as well, but only by modifying the Navier--Stokes equations in such a way that the dyadic model from Step~1 is directly manifested in the nonlinearity. The subsequent effort to find a suitable embedding in the full Euler nonlinearity~\cite{MR3809006,MR4196152} faced major obstacles.} of~\cite{MR3486169} was a successful implementation of Step~1, using an ingenious delay mechanism to overcome the regularizing effect of the Kolmogorov-type turbulent cascade. Unfortunately, executing Step~2 with this mechanism appears to be hopelessly far out of reach. Some of the main challenges will be discussed in \S\ref{prospects_section}.

It is noteworthy that a strategy analogous to---but far simpler than---the one proposed by Tao for the blow-up problem \emph{has} recently been implemented toward constructing examples of new phenomena for the Navier--Stokes equations. For the problem of non-uniqueness in certain sharp spaces, Step~1 was carried out by the author in~\cite{MR4992011}, and Step~2 was executed (in part) by the author with M.\ Coiculescu~\cite{MR5008166}. The same strategy was subsequently applied in~\cite{palasek2025arbitrary,cheskidov2025instantaneous}, although the two-step procedure was not articulated explicitly in those cases.

Those results motivate revisiting Tao's strategy toward finite-time blow-up. Let us briefly survey the well-known candidates for Step 1, and their respective limitations:

\begin{description}[leftmargin=.75cm, labelsep=0.3cm]
	\item[Katz--Pavlovi\'c (KP) model~\cite{MR2095627,MR2038114,MR2231615}]For $\lambda>1$, $\alpha\geq1$, and $\nu\geq0$, consider the ODE system \eqn{X_k'=-\nu \lambda^{2k}X_k+\lambda^{\alpha(k-1)}X_{k-1}^2-\lambda^{\alpha k}X_kX_{k+1}.}For the inviscid ($\nu=0$) model with forcing at low modes, there is a turbulent cascade toward a global attractor corresponding to the Kolmogorov-$5/3$ or Onsager regularity. Near the global attractor, the nonlinearity is weak compared to the dissipation when $\alpha<3$. This has a regularizing effect and the model corresponding to Navier--Stokes in dimension three is globally well-posed~\cite{MR2415066,MR2844828}.
	\item[Standard Obukhov model~\cite{obukhov}]With the parameters as above, consider the ODE system \eqn{X_k'=-\nu \lambda^{2k}X_k+\lambda^{\alpha(k-1)}X_{k-1}X_k-\lambda^{\alpha k}X_{k+1}^2.} The inviscid model is globally regular due to the dominance of the ``high-high-low'' interaction when a cascade is underway~\cite{MR2180809}. Very recently, global regularity was proved for the viscous model corresponding to dimension three due to the effect of the high frequencies dissipating away before a cascade has time to arrive~\cite{looi}.
	\item[Tao's model~\cite{MR3486169}] Tao engineered a complicated model with four modes per frequency shell, connected in a delicate way by KP-type interactions, Obukhov-type interactions, as well as a third type (``rotor gates''). This is the only known model that exhibits finite-time blow-up with viscosity in dimension three. A delicate ``clock'' mechanism creates a well-controlled, discrete series of energy transfers which averts both the turbulent cascade seen in the KP model, and the out-of-control growth that occurs in the Obukhov model. The drawbacks of this model include its complexity and the large discrepancy from organic Navier--Stokes interactions, and therefore the likely difficulty in implementing Step 2; see \S\ref{prospects_section} for further discussion.
\end{description}

\begin{figure}[htbp]
    \centering
    \input{fig.tex}
    \caption{Comparison of several candidate models of blow-up: the Katz--Pavlovi\'c (KP) model~\cite{MR2095627,MR2038114}, the Obukhov model with exponential or super-exponential frequency separation, and Tao's model from \cite{MR3486169}. We illustrate each model using the quadratic gate notation from \cite[\S5]{MR3486169}. Filled arrows represent KP-type interactions or ``pump gates''; open arrows represent Obukhov-type interactions or ``amplifier gates''; and loops represent ``rotor gates'' whose oscillation rate is proportional to the mode to which the loop is connected.}
    \label{fig:models}
\end{figure}

    \blfootnote{*To our knowledge, regularity is known in these cases only when $f=0$. It is reasonable to conjecture that the regularity mechanisms extend for ``smooth'' forcing. We also emphasize that these regularity claims are specifically for choices of the model parameters corresponding to dimension three.}

We illustrate each of these three standard shell models, as well as the variant considered here, in Figure~\ref{fig:models}. The central purpose of this article is to put forward this new shell model that fulfills Step~1, from which we hope Step~2 might be more tractable. In other words, we seek a shell model that can point to a plausible blow-up scenario for the 3D Navier--Stokes equations for which an embedding in the spirit of \cite{MR5008166} might be practical. 

\subsection{Statement of the model and main theorems}

In this work we consider a certain natural variant of the classical Obukhov model. We defer a detailed discussion of the model's interpretation to \S\ref{motivation_section}.

For $(N_k)_{k\geq0}$ an increasing sequence of frequency scales growing at least exponentially, we say that $X:[0,T]\to\mathbb R^{\mathbb N}$ solves the Obukhov model with external force $f:[0,T]\to\mathbb R^{\mathbb N}$ if
\eq{\label{l2_obukhov}
	X_k'&=-\nu N_k^2X_k+N_{k-1}^\alpha X_{k-1}X_k-N_k^\alpha X_{k+1}^2+f_k
}
where $X_{-1}\equiv0$, with $\alpha\geq1$ a physical parameter related to the intermittency which will be discussed further in \S\ref{motivation_section}. In \eqref{l2_obukhov}, $X_k$ models the $L^2$-norm of the $k$th mode of a Navier--Stokes vector field, localized in frequency around scale $N_k$.

When the frequency scales form a geometric sequence $N_k=\lambda^k$ for some $\lambda>1$, the system \eqref{l2_obukhov} reduces to the standard Obukhov model introduced in~\cite{obukhov} and studied subsequently in \cite{MR2180809}, \cite{MR4992011}, etc. We will find that by generalizing to allow more widely separated sequences of scales, the behavior can change significantly.

\begin{define}[Besov-type spaces]
    Define the Banach space $\mathcal{C}^s=\{X:\mathbb N\to\mathbb R:\|X\|_s<\infty\}$ where
    \eqn{
    \|X\|_{s}\coloneqq\sup_{k\geq0}N_k^s|X_k|.
    }
    Define also the Fr\'echet space $\mathcal{C}^\infty\coloneqq\cap_{s>0}\mathcal{C}^s$.
\end{define}

Because the model assumes a particular fractal structure for the spatial support of the fluid (see \S\ref{motivation_section}), measuring $X_k$ in $\mathcal C^s$ is analogous to measuring the velocity in the Besov scale $B_{p,\infty}^{s+(\frac2p-1)(\alpha-1)}(\mathbb T^3)$ for any $p\in[1,\infty]$. Furthermore, $\mathcal C^\infty$ corresponds to the space of smooth functions on $\mathbb T^3$.

The space $\mathcal C^s$ depends on the choice of $N_k$, for which we now fix
\eq{\label{nk_choice}
N_k=N_0^{b^k}
}
for some $N_0>1$ and $b>1$ to be specified in the course of the proofs in \S\ref{proof_section}.  

Having established the functional setting, we now state the local theory for the viscous and inviscid Obukhov model with smooth initial data and forcing. If $X^0\in \mathcal{C}^\infty$ and $f\in C_t^\infty([0,\infty);\mathcal C^\infty)$, then there exist $T>0$ and a unique solution $X\in \cap_{s>0}C_t([0,T);\mathcal{C}^s)$. Moreover, there exists a maximal time of existence $T_*\in (0,\infty]$. These claims follow from completely standard fixed point arguments which we do not expound upon.

\begin{define}
    We say finite-time blow-up occurs in a solution $X(t)$ with initial data $X^0\in \mathcal{C}^\infty$ and force $f\in C_t^\infty([0,\infty);\mathcal C^\infty)$ if $T_*<\infty$.
\end{define}

Our first main result is that, for $N_k$ defined by \eqref{nk_choice}, the viscous Obukhov model exhibits finite-time blow-up from $\mathcal C^\infty$ initial data under a $C_t^\infty\mathcal C^\infty$ force.

\begin{thm}[Viscous blow-up]\label{viscous_ns_blowup_theorem}
    Let $\nu>0$, $\alpha>2$, and $N_k$ as in \eqref{nk_choice}. Then there exist positive initial data $X^0\in \mathcal{C}^\infty$ and force $f\in C_t^\infty([0,\infty);\mathcal{C}^\infty)$ that give rise to a finite-time blow-up of \eqref{l2_obukhov}. In particular, for any $s>0$, the data can be chosen so that $X$ becomes unbounded in $\mathcal{C}^{s}$.
\end{thm}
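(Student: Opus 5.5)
Here is the plan. We build a solution in which energy passes, in finite total time, through an infinite sequence of shells. The transfer runs \emph{one shell at a time}, each shell being activated by the Obukhov-type amplification term $N_{k-1}^\alpha X_{k-1}X_k$.

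Positivity is automatic as long as $X_k>0$ and $f_k\ge0$, since the equation for $X_k$ is linear in $X_k$ apart from the $-N_k^\alpha X_{k+1}^2$ drain. While shell $k-1$ is ``active'' with amplitude $A_{k-1}$, the mode $X_k$ satisfies approximately
\eqn{X_k'\approx (N_{k-1}^\alpha A_{k-1}-\nu N_k^2)X_k .}
The super-exponential choice \eqref{nk_choice} with $b$ close to $1$ gives $N_k^2=N_{k-1}^{2b}\ll N_{k-1}^\alpha A_{k-1}$ whenever $A_{k-1}\gtrsim N_{k-1}^{-\sigma}$ with $\sigma<\alpha-2b$. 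This is where $\alpha>2$ enters. So the amplification beats viscosity, and $X_k$ grows exponentially at rate $\mu_k\sim N_{k-1}^{\alpha}A_{k-1}$.

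The growth of $X_k$ drains $X_{k-1}$ through $-N_{k-1}^\alpha X_k^2$. Energy conservation of the pair, namely that $\tfrac{d}{dt}(X_{k-1}^2+X_k^2)$ has the nonlinear terms cancel, shows that the energy moves from shell $k-1$ to shell $k$. The ansatz is therefore $A_k\approx A_{k-1}$ up to small losses, i.e.\ an energy-level cascade.

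\textbf{Step 1: a seed at every shell.} Take initial data $X_k(0)=\epsilon_k$ with $\epsilon_k=\exp(-N_k^{\gamma})$, or a similar super-polynomially small choice, so that $X^0\in\mathcal C^\infty$. Use the force only to keep these seeds alive against viscosity until their turn comes. Concretely, set $f_k(t)=\nu N_k^2\epsilon_k\,\chi(t)$ with smooth cutoffs, which stays in $C^\infty_t\mathcal C^\infty$ because of the $\epsilon_k$ decay. Also force shell $0$ to give $X_0$ an order-one amplitude that starts the cascade.

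\textbf{Step 2: inductive phase estimate.} Suppose at time $t_{k-1}$ shell $k-1$ holds amplitude $A_{k-1}\ge c$, and all higher shells are near their seeds. We show that within time
\eqn{\tau_k\lesssim \frac{\log(1/\epsilon_k)}{N_{k-1}^\alpha A_{k-1}}\sim N_{k-1}^{b\gamma-\alpha}}
the mode $X_k$ reaches amplitude comparable to $A_{k-1}$, while $X_{k-1}$ drops. Choose $b\gamma<\alpha$, so that $\sum\tau_k<\infty$. During this phase we need three controls:
\begin{enumerate}[(a)]
\item The backreaction $N_k^\alpha X_{k+1}^2$ on $X_k$ stays negligible until $X_k$ is large. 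This holds because $X_{k+1}$ grows only at rate $N_k^\alpha X_k$, so it stays seed-sized while $X_k$ is small.
\item Shells below $k-1$ are already depleted. Once $X_{k-1}$ has been drained, $X_{k-2}$ decays as well, and shell $k-2$ only weakly pumps shell $k-1$.
\item The viscous loss of energy in shell $k$, of size $\nu N_k^2\tau_k=\nu N_{k-1}^{2b+b\gamma-\alpha}$, is summable. This holds for $b$ near $1$ and $\gamma$ small, since $\alpha>2$. It gives $\prod(1-\delta_k)>0$, hence a uniform lower bound on $A_k$.
\end{enumerate}

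\textbf{Step 3: conclusion.} With $T^*=\sum\tau_k<\infty$ and $A_k\ge c>0$ at time $t_k<T^*$, we get $\|X(t_k)\|_s\ge cN_k^s\to\infty$ for every $s>0$. So the solution cannot stay in $\mathcal C^s$ up to $T^*$, and $T_*\le T^*$.

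The main obstacle is Step 2(a)--(b): making the transfer clean. Obukhov dynamics can cause the energy to oscillate back, or several shells to be active at once. The plan is a bootstrap on the time interval $[t_{k-1},t_k]$. We use the reduced two-mode system $(X_{k-1},X_k)$, a rotation-like system with conserved energy in which $X_k$ grows logistically. Everything else is treated as a perturbation bounded by the super-exponential scale gaps $N_{k}\gg N_{k-1}^{\alpha}$, provided $b$ is chosen suitably. If the clean handoff fails, the fallback is to use the forcing to damp residual energy in the lower shells.
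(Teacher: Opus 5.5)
There is a genuine gap, and it is in Step~2, which carries the whole argument. First, positivity is not automatic: the drain $-N_{k-1}^\alpha X_k^2$ does not vanish at $X_{k-1}=0$.

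Your reduced pair $X_{k-1}'=-\mu X_k^2$, $X_k'=\mu X_{k-1}X_k$ (with $\mu=N_{k-1}^\alpha$ and pair energy $E$) is solved exactly by $X_{k-1}=-\sqrt E\tanh(\mu\sqrt E(t-t_0))$ and $X_k=\sqrt E\operatorname{sech}(\mu\sqrt E(t-t_0))$. So $X_k$ does not grow logistically. It is a pulse of width $(\mu\sqrt E)^{-1}$. Afterwards the energy is back in shell $k-1$ with $X_{k-1}\approx-\sqrt E$, which then damps $X_k$.

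Energy therefore moves up only if $X_{k+1}$ takes off inside this pulse. Over the whole pulse, $X_{k+1}$ gains $N_k^\alpha\int X_k\,dt=\pi(N_k/N_{k-1})^\alpha$ e-foldings, half of them before $X_k$ peaks. Your (a) and your induction hypothesis need $\log(1/\epsilon_{k+1})$ to exceed what is gained before the peak. Continuation needs it below the total. So $\log(1/\epsilon_{k+1})$ must lie essentially between $\frac\pi2$ and $\pi$ times $(N_k/N_{k-1})^\alpha=N_{k-1}^{(b-1)\alpha}$.

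With $\epsilon_k=\exp(-N_k^\gamma)$, the ratio $\log(1/\epsilon_{k+1})/(N_k/N_{k-1})^{\alpha}=N_{k-1}^{b^2\gamma-(b-1)\alpha}$ tends to $0$ or to $\infty$. In the first case $X_{k+1}$ takes off while $X_k\ll1$: several shells are active at once, the back-reaction is not negligible, and your induction hypothesis is false. In the second case $X_{k+1}$ never takes off, the energy is sent back down, and the cascade stalls.

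Making this work would require tuning every seed into that window, plus a shooting argument controlling phase errors over infinitely many handoffs. That is the real content, and it is absent.

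The fallback does not help either. A force that keeps $X_{k-1}$ from being driven negative must be comparable to $N_{k-1}^\alpha X_k^2\gtrsim N_{k-1}^\alpha c^2$, at times accumulating at $T^*$. Hence $f\notin C_t([0,T^*];\mathcal C^0)$, let alone $C_t^\infty\mathcal C^\infty$.

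The missing idea is to make the back-reaction negligible rather than use it as the transfer mechanism, and to let the timing be fixed backward in time. The paper targets the decreasing profile $x_k(0)=A_k=N_k^\beta$, i.e.\ $X_k(0)=N_k^{\beta-\alpha}$, with $\beta$ as in \eqref{viscous_A_assumptions}. Then:
\begin{itemize}
\item $\delta_kA_{k+1}^2/A_k^2=(N_k/N_{k+1})^{2(\alpha-\beta)}$ is tiny, so the drain is a perturbation and all modes stay positive;
\item $\beta>2b$ lets amplification beat viscosity;
\item $\beta>\alpha-s$ gives unboundedness in $\mathcal C^s$ without any order-one energy ever reaching high shells, so your target $A_k\ge c$ is more than needed.
\end{itemize}
Galerkin truncations are solved backward from $t=0$, trapped between the barriers $\eta_k\le x_k\le\zeta_k$ of Lemma~\ref{barrier_bounds_lemma}, and passed to the limit by compactness. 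The correctly tuned data then come out automatically: for $k\ge3$, \eqref{z_bound} and \eqref{eta_global_bound} give $\log(A_k/x_k(-T))\asymp A_{k-1}/A_{k-2}$. This is exactly the two-sided seed condition you would otherwise have to impose by hand.

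Your forcing idea, switching off viscosity before activation, does match the paper's $g_k=(1-\rho_k)N_k^2x_k$. That force is harmless because $x_k\lesssim A_k\exp(-\frac c4A_{k-1}/A_{k-2})$ on its support.
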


Some remarks on the strength and context of the viscous theorem:

\begin{remark}
The inclusion of an external force is necessary for blow-up in this model, as shown in~\cite{looi}. In Theorem~\ref{viscous_ns_blowup_theorem}, the external force is smooth and has no role at the blow-up time. In fact,
\eqn{
\| f(t) \|_{\mathcal{C}^s} \to 0 \quad \text{as } t \to T_*, \quad \forall s > 0.}
\end{remark}

\begin{remark}
The permissible range $\alpha>2$ is completely sharp---when $\alpha\leq2$, the energy is critical or better, leading to global well-posedness from $\mathcal C^\infty$ data. Furthermore, note that $\alpha\in[1,5/2]$ corresponds to the intermittency range relevant in three spatial dimensions; therefore, the window $\alpha\in(2,5/2]$ offers candidate blow-up scenarios for 3D Navier--Stokes.
\end{remark}

\begin{remark}
While the exact double exponential form in \eqref{nk_choice} is not crucial, it is probably necessary for blow-up that $(N_k)_{k\geq1}$ grows faster than exponentially; see~\cite{MR2180809}. See, for instance,~\eqref{exp_small} and \eqref{ratios} below where we make essential use of this fact.
\end{remark}

\begin{remark}
    Unboundedness in $\mathcal C^s$ for $s>0$ is sharp in light of the fact that $\mathcal C^0$ is controlled by the energy (i.e., $\ell^2(\mathbb N)$); see also the heuristic discussion in \S\ref{inviscid_heuristics}. Analogizing to the PDE setting, the solution would be unbounded in $B^{\epsilon}_{2,\infty}$ and $B^0_{2+\epsilon,\infty}$, just above $L^2$.

    One can see in particular that the blow-up is Type II in the sense that it exits the critical space $\mathcal C^{\alpha-2}$. Indeed, in the notation of \S\ref{proof_section}, the solution approaches the blow-up state $X_k=N_k^{\beta-\alpha}$ for some $\beta>2$.
\end{remark}

Next we state a result for the inviscid case.

\begin{thm}[Inviscid blow-up]\label{inviscid_euler_blowup_theorem}
    Let $\nu=0$, $\alpha\geq1$, and $N_k$ as in \eqref{nk_choice}. There exists positive initial data $X^0\in \mathcal{C}^\infty$ that gives rise to a finite-time blow-up of \eqref{l2_obukhov} with $f=0$. In particular, for any $s>0$, the data can be chosen so that $X$ becomes unbounded in $\mathcal{C}^{s}$.
\end{thm}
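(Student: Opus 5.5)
The plan is to build a front that sweeps up the modes one at a time. I would choose positive data that is tiny in every shell, for instance $X_0^0=1$ and $X_k^0=N_k^{-M_k}$ with $M_k\to\infty$ slowly (say $M_k=k$), so that $X^0\in\mathcal C^\infty$. I would then show that the modes are successively "switched on" at times $t_1<t_2<\cdots$ with $T_*\le\lim t_k<\infty$. Once mode $k$ has been switched on, it should remain at least of size $N_k^{-\gamma}$ up to $T_*$, for some $\gamma$ that can be made smaller than any prescribed $s>0$. This gives $N_k^s X_k(t)\ge N_k^{s-\gamma}\to\infty$, so $X$ is unbounded in $\mathcal C^s$. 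The choice $N_k=N_0^{b^k}$, with $b$ large depending on $s$ and $\alpha$, is what makes this possible.

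\textbf{Heuristic target profile.} Setting $X_k'=0$ in \eqref{l2_obukhov} with $\nu=0$, $f=0$ gives the flux balance $N_{k-1}^\alpha X_{k-1}X_k=N_k^\alpha X_{k+1}^2$. The power ansatz $X_k=N_k^{-\gamma}$ together with $N_{k\pm1}=N_k^{b^{\pm1}}$ reduces this to $\gamma(2b-1-b^{-1})=\alpha(1-b^{-1})$. Hence $\gamma=\alpha/(2b+1)$, which is below $s$ once $b$ is large. So the mechanism I expect is a front that drags the profile from the $\mathcal C^\infty$ data up to roughly this flux-carrying stationary profile, while the energy $\sum X_k^2$ is conserved. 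The energy is conserved because the nonlinear terms telescope, and this also gives the a priori bound $|X_k|\le E^{1/2}$.

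\textbf{Inductive step.} Suppose that at time $t_k$ the modes $j\le k$ are comparable to the target, $X_j\approx N_j^{-\gamma}$ up to constants, and the modes $j>k$ are still essentially at their seeds. On $[t_k,t_{k+1}]$ the mode $X_{k+1}$ obeys
\[
X_{k+1}'=X_{k+1}\bigl(N_k^\alpha X_k\bigr)-N_{k+1}^\alpha X_{k+2}^2 .
\]
The loss term is negligible as long as $X_{k+2}$ is at its seed, because $N_{k+1}^\alpha N_{k+2}^{-2M_{k+2}}$ is super-exponentially small. So $X_{k+1}$ grows exponentially at rate $\gtrsim N_k^{\alpha-\gamma}$, and it reaches $N_{k+1}^{-\gamma}$ after a time of order
\[
t_{k+1}-t_k\lesssim \frac{M_{k+1}\log N_{k+1}}{N_k^{\alpha-\gamma}}\sim \frac{k\,b^{k}\log N_0}{N_0^{(\alpha-\gamma)b^k}},
\]
which is summable in $k$. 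This yields $T_*<\infty$, and also shows that no global solution with bounded $\mathcal C^s$ norm can exist.

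Two facts must be propagated in the bootstrap. First, the modes $j\le k$ stay within constant factors of $N_j^{-\gamma}$ and in particular stay positive. Second, the modes $j\ge k+2$ stay below a slightly enlarged seed. For the first, the drain on $X_k$ is $N_k^\alpha X_{k+1}^2\le N_k^{\alpha}N_{k+1}^{-2\gamma}$, and the gain is $N_{k-1}^\alpha X_{k-1}X_k$. Their balance is exactly the relation that defines $\gamma$, so I would bound $X_k$ from below by a sub-solution of the form $cN_k^{-\gamma}$ with $c$ slightly smaller at each stage. The super-exponential separation makes all cross-scale error factors like $N_{k-1}^\alpha/N_k^\alpha$ and $N_k^{-2\gamma}N_{k+1}^{\cdots}$ as small as needed; this is where the choice of $b$ and $N_0$ large enters. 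For the second, the growth rate of $X_j$ for $j\ge k+2$ is $N_{j-1}^\alpha X_{j-1}$ with $X_{j-1}$ still tiny, so these modes grow by at most a factor $\exp\bigl(O(\sum_k (t_{k+1}-t_k)N_{j-1}^\alpha X_{j-1})\bigr)=O(1)$.

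\textbf{Main obstacle.} I expect the hardest step to be controlling $X_k$ from below after the front has passed. The interaction $x'=-Ny^2$, $y'=Nxy$ can flip $x$ negative and send energy back, and a leftover residual could otherwise shut off the cascade. I would first try to show that $X_{k+1}$ never grows beyond $\sim N_{k+1}^{-\gamma}$. The reason is that once it reaches this size, $X_{k+2}$ grows on the much faster time scale $N_{k+1}^{-(\alpha-\gamma)}$ and caps $X_{k+1}$ through the drain term. Then the drain on $X_k$ is at most $N_k^{\alpha}N_{k+1}^{-2\gamma}$, which is dominated by the gain from $X_{k-1}$ via the flux relation. This comparison argument, made uniform in $k$ with explicit losses $c_k\downarrow c_\infty>0$, is the technical core.
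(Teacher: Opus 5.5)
\textbf{There is a genuine gap: the forward-in-time front bootstrap fails at its first step, and the target profile you chose is not the one that drives the blow-up.}

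The failing step is your claim that the modes $j\ge k+2$ stay near their seeds while $X_{k+1}$ switches on. For $j=k+2$ the growth rate is $N_{k+1}^\alpha X_{k+1}$. But $X_{k+1}$ is exactly the mode that is growing on $[t_k,t_{k+1}]$, so it is not ``still tiny.'' Since $X_{k+1}$ grows at rate $\approx N_k^{\alpha-\gamma}$, you get $\int_{t_k}^{t_{k+1}}N_{k+1}^\alpha X_{k+1}\,dt\approx (N_{k+1}/N_k)^{\alpha-\gamma}=N_0^{(\alpha-\gamma)(b-1)b^k}$. This is doubly exponential in $k$, whereas leaving the seed only costs $(M_{k+2}-\gamma)b^{k+2}\log N_0$. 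So, even granting that $X_{k+3}$ is dormant, $X_{k+2}$ takes off when $X_{k+1}$ is only about $N_k^{\alpha-\gamma}N_{k+1}^{-\alpha}$ (up to logarithms), far below $N_{k+1}^{-\gamma}$. At that moment $X_{k+1}$ does not even carry the energy $N_{k+2}^{-2\gamma}$ needed to lift $X_{k+2}$ to its target, since $\gamma=\alpha/(2b+1)<\alpha/(b+1)$. The one-mode-at-a-time front therefore never forms. Instead all higher modes ignite together with the high--high--low drain active at every scale, which is the scenario the paper explicitly warns against.

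The root cause is that polynomially small seeds $N_k^{-M_k}$ are far too large. In the paper's solution the data come out as $x_k(-T)=A_k e^{-\Theta(A_{k-1}/A_{k-2})}$ (from the upper barrier at $t_k$ and the lower barrier $\eta_k$). So $\log(1/X_k^0)$ is doubly exponential and pinned from both sides. It must be large enough that $X_k$ does not take off before $X_{k-1}$ has arrived, and small enough that $X_k$ arrives while $X_{k-1}$ is still frozen near its target. Rather than tune this forward in time, the paper prescribes the terminal profile $x_k(0)=A_k$. It then solves Galerkin truncations backward inside the trapping region $\eta_k\le x_k\le\zeta_k$, passes to the limit by compactness, and reads off $X(-T)\in\mathcal C^\infty$ from the upper barrier.

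Your target profile is also the wrong one, and this is why the lower bound you call the technical core does not close. The relation $N_{k-1}^\alpha X_{k-1}X_k=N_k^\alpha X_{k+1}^2$ with $\gamma=\alpha/(2b+1)$ is the constant-energy-flux (Onsager-critical) state. Gain and drain cancel exactly at the level of powers of $N$, so the scale separation gives you no small parameter. Suppose you only know $X_{k-1}\ge c_{k-1}N_{k-1}^{-\gamma}$, $X_k\ge c_kN_k^{-\gamma}$ and $X_{k+1}\le CN_{k+1}^{-\gamma}$. Then the gain-to-drain ratio at your sub-solution is just $c_{k-1}c_k/C^2$, so any loss in the constants lets the drain win.

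The paper's profile $A_k=N_k^\beta$ works for any $\beta\in(\max\{0,\alpha-s\},\alpha)$ and any $b>1$ because it is a frozen snapshot, not a balance. The mode $x_{k+1}$ is large only on a window of length $\sim A_k^{-1}$ before $t=0$, so the drain on $x_k$ is negligible in time-integrated form: $\delta_kA_{k+1}^2/A_k^2=(N_k/N_{k+1})^{2(\alpha-\beta)}\to0$. This holds even for parameters where the drain exceeds the gain pointwise at $t=0$. Moreover, backward in time the drain has the favorable sign for lower bounds: $-\delta_kx_{k+1}^2\le0$ only increases $x_k$ as $t$ decreases. Hence $\eta_k=A_k\exp(-\int_t^0\zeta_{k-1})$ is a lower barrier essentially for free, and only the upper barrier needs a Duhamel estimate. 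The forward-in-time positivity problem you identify does not arise in that formulation.
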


\begin{remark}
    Various blow-up scenarios have been demonstrated before in inviscid shell models; see for instance~\cite{MR2095627,MR2038114,MR2231615,MR2180809,MR2337019,MR2600714} for the Katz--Pavlovi\'c and similar models, as well as Tao's model~\cite{MR3486169}. Our Theorem~\ref{inviscid_euler_blowup_theorem} is the first\footnote{We point out that in~\cite{MR3339169}, Jeong and Li prove blow-up for a mixed Obukhov--KP system with a small parameter in front of the Obukhov part; thus the KP behavior is dominant in their scenario.} for an Obukhov-type system.
\end{remark}

\begin{remark}
    The blow-up is unstable in every $\mathcal C^s$. Indeed, if one truncates the initial data above some $k_*$, the solution stays supported below $k_*$ for all $t>0$. Since $X^0\in\mathcal C^\infty$, such a truncation can be made arbitrarily small in $\mathcal C^s$ with a large enough choice of $k_*$.
\end{remark}

\subsection{Blow-up mechanism}

In this section, we summarize the blow-up mechanism and heuristically justify the parameters and constraints that appear in the main theorems. See Figure~\ref{fig} for an illustration of the blow-up.

\subsubsection{Inviscid case}\label{inviscid_heuristics} The blow-up arises from the dominance of the low-high-high interaction, with the opposing high-high-low term having negligible influence. Consider the simplified (non-energy-conserving) model
\eqn{
X_k'&= N_{k-1}^\alpha X_{k-1}X_k, \qquad k\geq0
}
with the usual convention that $X_{-1}\equiv0$. Suppose we want the solution to arrive in finite time at a singular profile $X_k\sim A_k$ where\footnote{We warn that in the proofs (\S\ref{proof_section}), we work with certain rescaled variables; thus the $A_k$ appearing in this section corresponds to $N_k^{-\alpha} A_k$ in that notation. In other words, $\gamma=\beta-\alpha$, with $\beta$ as in \S\ref{proof_section}.} $A_k=N_k^\gamma$ for some $\gamma<0$. Observe that $X_0(t)$ is completely stationary, so we take initially $X_0^0=A_0$ with the rest of the data positive and decaying rapidly in $N_k$. Clearly then $X_1(t)=X_1^0\exp(N_0^\alpha A_0t)$; thus it arrives at its destination at $t\approx T_1$ where we define
\eqn{
T_k\coloneqq (N_{k-1}^\alpha A_{k-1})^{-1}\log\frac{A_k}{X_k^0}.
}
Note also that $X_1(t)$ is nearly stationary on the time interval $I_1$, where we set
\eqn{
I_k=\{t:|t-T_k|\ll (N_{k-1}^\alpha A_{k-1})^{-1}\}.
}
Thus, for $t\in I_1$, we may assume $X_1(t)\approx A_1$ which leads to, roughly speaking,
\eqn{
X_2'\approx \mathbbm1_{I_1}(t)N_1^\alpha A_1X_2,
}
so $X_2(t)$ grows exponentially in this time window and would arrive at $A_2$ after growing for approximately time $T_2$. The claim is that this type of growth can be continued for all $k\geq1$. A clear constraint is that the growth time for $X_k$ must stay within the stability interval $I_{k-1}$ for $X_{k-1}$; thus
\eqn{
T_k\ll (N_{k-2}^\alpha A_{k-2})^{-1}.
}
This effectively sets a constraint on $X_k^0$:
\eqn{
X_k^0\geq A_k\exp\Big(-c\frac{N_{k-1}^\alpha A_{k-1}}{N_{k-2}^\alpha A_{k-2}}\Big).
}
Here one already sees why the super-exponential frequency scale is indispensable: if $N_k$ and $A_k$ depended \emph{exponentially} on $k$, then the right-hand side would be at best a small multiple of $A_k$; thus, the best one could hope for would be a norm inflation-type result, not a genuine loss of regularity. With instead double exponential dependence, we have $\frac{N_{k-1}^\alpha A_{k-1}}{N_{k-2}^\alpha A_{k-2}}=N_k^{c}$ for some $c>0$, permitting $\mathcal C^\infty$ initial data.

Another constraint that must be verified is that the cumulative effect of the high-high-low term $-N_k^\alpha X_{k+1}^2$ is negligible. Consider the time interval $I_{k+1}$ when $X_{k+1}$ is growing. Continuing the heuristic calculations above, we may assume
\eqn{
X_{k+1}(t)\approx X_{k+1}^0\exp(N_{k}^\alpha A_{k}(t-t_*))
}
for some suitable start time $t_*$. Then the cumulative effect of the negative interaction on $X_k$ is roughly
\eqn{
\int_{I_{k+1}}-N_k^\alpha X_{k+1}^2(t)dt\approx -A_k^{-1} (X_{k+1}^0)^2\exp(2N_k^\alpha A_kT_{k+1})\approx -A_k^{-1}A_{k+1}^2,
}
having used the definition of $T_{k+1}$. For this to have a negligible effect on $X_k$, which reaches size $\sim A_k$, we should have $A_{k+1}\ll A_k$. If $A_k=N_k^\gamma$, then this requires $\gamma<0$, and we have for the blow-up profile
\eqn{
\|X(T_*)\|_{\mathcal C^s}\sim \sup_{k\geq0} N_k^{\gamma+s},
}
so $X$ becomes unbounded in $\mathcal C^s$ if $s>-\gamma$. Since $\gamma<0$ can be chosen freely, this heuristically justifies the necessity of $s>0$ in Theorems~\ref{viscous_ns_blowup_theorem} and \ref{inviscid_euler_blowup_theorem}.

We emphasize that the above heuristic discussion contains several shortcomings and cannot be directly converted to a proof. For instance, it is not accurate to say that $X_k(t)$ begins growing only when $X_{k-1}(t)$ becomes comparable to its target size $A_{k-1}$, even when using $N_0\gg1$ as a large parameter to keep the scale separation large. At a technical level, it is necessary to designate the desired blow-up profile $X_k(0)=A_k$ and evolve the (truncated) system \emph{backward in time}. Using a trapping region argument, one can justify that the backward solution obeys uniform estimates, allowing passage to a limit, and that the limit is regular at some $t=-T$. Similar arguments involving local backward-in-time continuation from the blow-up have appeared before, for instance in~\cite{MR4929621}.

\begin{figure}
    \centering
    \includegraphics[width=.8\linewidth]{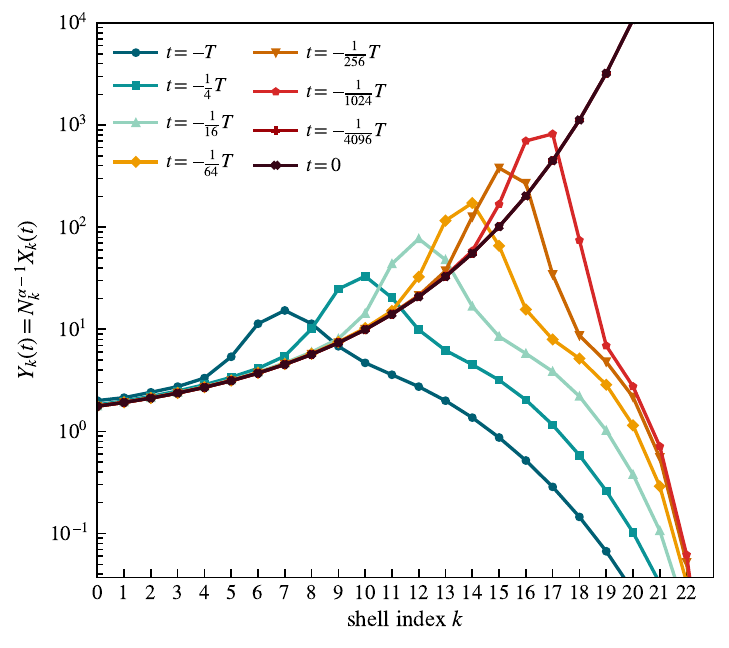}
    \caption{A blow-up of the viscous Obukhov model at $t=0$. We depict snapshots of the solution $Y_k(t)=N_k^{\alpha-1}X_k(t)$, which models $\|P_ku(t)\|_{L^\infty}$, at various times $t\in[-T,0]$, including the blow-up profile $Y_k(0)=N_k^{1.4}$ (black). The system \eqref{l2_obukhov} is simulated with $\nu=1$, $\alpha=\frac52$, and $N_k=1.5^{1.15^k}$.}
    \label{fig}
\end{figure}

\subsubsection{Viscous case}\label{viscous_discussion}

The viscous case is similar, with the new constraint that the damping term should be negligible during the growth interval $I_k$. The competition between the two effects is captured as
\eq{\label{approx}
X_k'\approx (N_{k-1}^\alpha X_{k-1}-\nu N_k^2)X_k
}
with the high-high-low interaction arranged to be negligible as discussed in \S\ref{inviscid_heuristics}. As explained above, $X_{k-1}\big|_{I_k}\approx A_{k-1}$, so growth requires $A_{k-1}\gg N_k^2 N_{k-1}^{-\alpha}$. Recall from above that we must have $A_{k-1}\ll1$, leading to the requirement $\alpha>2$. In other words, the blow-up occurs in the energy supercritical range.

One encounters another constraint in the viscous setting which is not connected to energy criticality, but is nonetheless unavoidable as recently shown by Looi~\cite{looi}. Consider \eqref{approx} with positive initial data. Then growth or decay of $X_k(t)$ is governed by the sign of $-\nu N_k^2t+N_{k-1}^\alpha\int_0^tX_{k-1}$. By Cauchy--Schwarz and the energy equality, the second term is $O(N_{k-1}^{\alpha-1})$, so growth is obstructed when $\alpha<3$. In essence, each mode $X_{k-1}$ (for $k$ large) is only \emph{required} to be active over the short time span $T_{k}$ during which it causes growth of $X_{k}$ (see above), but, if arising from the initial data, is forced to be active for the full solution lifespan. To avoid this issue, we use a small force to deactivate the dissipation of each mode for times prior to its activation. Because the initial data is so regular, the high modes begin very small; thus this deactivation can be achieved with a very regular force. Furthermore, the forcing at each mode can be arranged to be supported away from the blow-up time. At each frequency level, the force in a sense sets the stage for the blow-up, but has no role in the growth itself.

\textbf{Plan of the paper:} In \S\ref{motivation_section}, we explain in detail how the shell model \eqref{l2_obukhov} corresponds to the full Navier--Stokes equations. In \S\ref{proof_section}, we prove the blow-up for the viscous and inviscid versions of the system. Finally, in \S\ref{prospects_section}, we discuss the advantages of pursuing a Navier--Stokes blow-up via an embedding of \eqref{l2_obukhov} and compare its utility to the other well-known shell models.

\tocless{\section*}{Acknowledgments}

This work was supported by the National Science Foundation under Grant No. DMS-2424441. This work benefited from discussions with Sam Looi. GPT-5.4 assisted with generating figures.

\medskip

\section{Motivating the shell model}\label{motivation_section}

In this section we motivate the use of the Obukhov model \eqref{l2_obukhov} and connect its terms with particular interactions within the Navier--Stokes equations. An extended discussion is warranted because the model has been studied very little in general, and, to our knowledge, has never been considered in the general case with frequency scales $N_k$ growing faster than exponentially. This omission presumably arose because shell models have conventionally been used to model generic turbulent flows, for which $N_k=\lambda^k$ is the only reasonable choice. However, recent constructions of counterexamples in mathematical fluids have shown that a more lacunary frequency support can be advantageous.

Recall the full incompressible Navier--Stokes and Euler equations, given in \eqref{nse}. We are concerned with a scenario in which the velocity can be approximately decomposed as
\eqn{
u=\sum_{k\geq0}u_k,\qquad\supp\hat u_k\subset\{\xi\in\mathbb Z^3:|\xi|\sim N_k\}
}
for some rapidly increasing sequence of frequency scales $(N_k)_{k\geq0}$. To reduce to a shell model, one must choose which nonlinear interactions between the $u_k$ to retain and which to discard. The Obukhov model arises after assuming $u_k$ is acted upon only by the interactions\footnote{This pair of interactions is permissible because it conserves energy: the energy transferred up by the first term is balanced by energy transferred down by the second term. There is one other pair of nearest-neighbor interactions with this property, which corresponds to the Katz--Pavlovi\'c model.} $u_{k}\cdot\grad u_{k-1}$ and $u_{k+1}\cdot\grad u_{k+1}$. To justify this choice, it is important to point out that these interactions affect $u_k$ even when $(N_k)_{k\geq0}$ grows super-exponentially. Indeed, if $\hat u_{k+1}$ is supported in $\{\xi\in\mathbb Z^3:|\xi|\sim N_{k+1}\}$, then $\widehat{u_{k+1}\otimes u_{k+1}}$ can occupy all of $B(0,O(N_{k+1}))$.

We model the interactions by scalar functions $X_k(t)$ and $Y_k(t)$ which correspond to the $L^2$ and $L^\infty$ norms, respectively, of the velocity, projected to frequency scale $N_k$:
\eqn{
X_k\sim\|P_{k}u\|_{L^2},\qquad Y_k\sim\|P_{k}u\|_{L^\infty}
}
for $k\geq0$, with $N_k>0$ a rapidly increasing sequence. Here and elsewhere, $P_k$ is used heuristically to refer to a Littlewood--Paley-type projection to frequencies of magnitude near $N_k$. We sometimes abbreviate $P_{k}u$ by simply $u_k$.

A fundamental assumption of the model is that the PDE solution mainly concentrates on a fractal set where the component localized near the $k$th frequency scale occupies a volume $\sim N_k^{-2(\alpha-1)}$. Thus, $X_k$ and $Y_k$ are related by
\eq{\label{X_Y_relation}
X_k=N_k^{-(\alpha-1)}Y_k.
}
The intermittency parameter $\alpha$ is constrained from below by the volume of the torus and from above by the uncertainty principle, leading to the range $\alpha\in[1,\frac52]$ corresponding to the 3D Navier--Stokes equations.

In $L^\infty$, it is straightforward to see that the effect of $u_{k}\cdot\grad u_{k-1}$ at frequency level $k$ is, roughly, on the order $N_{k-1}Y_{k-1}Y_k$. The effect of the term $u_{k+1}\cdot\grad u_{k+1}$ is more subtle, particularly in the case of long range interactions ($N_{k+1}\gg N_k$). To project the interaction to the $k$th frequency shell, one effectively averages on length scale $\sim N_k^{-1}$. This averaging entails multiplying by the volume of the support of $u_{k+1}$ and dividing by the volume of the support of $u_k$, leading to the extra factor $N_{k+1}^{-2(\alpha-1)}/N_{k}^{-2(\alpha-1)}$. Moreover, because the interaction can be written in divergence form as $\div u_{k+1}\otimes u_{k+1}$, one sees that the derivative falls on the lower frequency shell, producing a factor of $N_k$. In total, we arrive at the $L^\infty$-based Obukhov model
\eq{\label{linfty_obukhov}
	Y_k'=-\nu N_k^2Y_k+N_{k-1}Y_{k-1}Y_k-\Big(\frac{N_k}{N_{k+1}}\Big)^{2(\alpha-1)}N_kY_{k+1}^2+h_k(t),
}
having introduced an external force $h_k(t)$. Utilizing the relation \eqref{X_Y_relation} between $X_k$ and $Y_k$, one easily sees that \eqref{linfty_obukhov} is equivalent to the $L^2$-based Obukhov model \eqref{l2_obukhov} with an appropriately scaled force.

We mention that the equivalent systems \eqref{l2_obukhov} and \eqref{linfty_obukhov} share many key properties with the true Navier--Stokes system. For instance, defining the energy
\eqn{
e(t)&=\frac12\sum_k X_k^2(t)=\frac12\sum_k N_k^{-2(\alpha-1)}Y_k^2(t),
}
we have the identity
\eqn{
e(t)+\int_0^t\sum_k \nu N_k^2X_k^2(s)ds=e(0)+\int_0^t\sum_kX_kf_k
}
for solutions of \eqref{l2_obukhov} above a certain Onsager-like regularity and sufficiently regular forcing. When $N_k=\lambda^k$ for some $\lambda>1$ and $k$ runs over $\mathbb Z$, there is a discrete group of scaling symmetries
\eqn{
Y_k(t)\mapsto \lambda^n Y_{k-n}(\lambda^{2n}t),\quad n\in\mathbb Z
}
corresponding to a discrete subgroup of the usual scaling symmetries for the Navier--Stokes equations on $\mathbb R^3$. There is also a theory of global Leray solutions from $\ell^2$ data, where uniqueness is known to break down when $N_k=\lambda^k$ for some $\lambda>1$ large~\cite{MR4992011}. These parallels motivate the use of the model to gain insights about the full Navier--Stokes system.

\section{Proof of blow-up theorems}\label{proof_section}

Toward proving Theorems~\ref{viscous_ns_blowup_theorem} and~\ref{inviscid_euler_blowup_theorem}, it will be profitable to consider the rescaled unknown $x$ defined by
\eqn{
X_k=N_k^{-\alpha}x_k.
}
Dimensionally, $x$ corresponds to (for instance) $\|P_k\omega\|_\infty$ and \eqref{l2_obukhov} transforms into the system
\eq{\label{x_system_nondimensionalized}
x_k'=-\nu N_k^2x_k+x_{k-1}x_k-\delta_kx_{k+1}^2+g_k
}
for $k\geq0$, where
\eq{
\delta_k\coloneqq \Big(\frac{N_k}{N_{k+1}}\Big)^{2\alpha}
}
and $g_k\coloneqq N_k^\alpha f_k$. As before, we fix $x_{-1}\equiv0$. The new system \eqref{x_system_nondimensionalized} has the clear advantage that the growth rate of $x_k$ is precisely $x_{k-1}$, while the unfavorable high-high-low term is scaled by the small dimensionless parameter $\delta_k$.

By translation, we construct a blow-up at $t=0$ from data at $t=-T$. By rescaling, we may reduce to the two cases $\nu=0$ and $\nu=1$.

\subsection{The trapping region}\label{trapping_subsec}

Let $s>0$ as in Theorems~\ref{viscous_ns_blowup_theorem} and \ref{inviscid_euler_blowup_theorem}. Let $A_k=N_k^\beta$ be an increasing sequence of amplitudes where $b$ and $\beta$ are chosen to satisfy
\eq{\label{inviscid_A_assumptions}
b>1,\qquad \max\{0,\alpha-s\}<\beta<\alpha
}
if $\nu=0$, and
\eq{\label{viscous_A_assumptions}
b\in\Big(1,\frac\alpha2\Big),\qquad \max\{2b,\alpha-s\}<\beta<\alpha
}
if $\nu>0$. We also fix a small constant $c>0$ and define
\eq{\label{tk_times_def}
T\coloneqq\frac{c}{A_0},
\qquad
t_1\coloneqq t_2\coloneqq -T,
\qquad
t_k\coloneqq -cA_{k-2}^{-1}\quad (k\geq 3).
}
Since $A_k$ is increasing, we have
\eqn{
-T=t_1=t_2<t_3<t_4<\cdots<0.
}
Having defined the relevant parameters, let us record the following elementary facts: first, for any choice of parameters satisfying $c_3,c_4>0$, we may take $N_0>1$ large to arrange that
\eq{\label{exp_small}
\left(\frac{N_{k+i_1}}{N_{k+i_2}}\right)^{c_1}\left(\frac{A_{k+i_3}}{A_{k+i_4}}\right)^{c_2}\exp(-c_3\left(\frac{A_{k+1}}{A_k}\right)^{c_4})\text{ is arbitrarily small},
}
uniformly in $k\geq0$. Second, we can arrange that
\eq{\label{ratios}
\frac{A_k}{A_{k-1}}\leq\frac c{100}\frac{A_{k+1}}{A_k}\quad\forall k\geq1.
}
We point out that these statements make essential use of the fact that $(N_k)_{k\geq0}$ grows faster than exponentially, since otherwise, $A_{k+1}/A_k$ would be bounded.

The proof proceeds by constructing a trapping region for Galerkin-truncated solutions of \eqref{x_system_nondimensionalized} backward in time from $t=0$. The region itself is defined as follows. 

\begin{define}\label{R_region_definition}
Fix $K\in\mathbb N$. Define upper barriers $\zeta_k$ and lower barriers $\eta_k$ on
$[-T,0]$ as follows: for $1\leq k\leq K$, let $\zeta_k$ be the unique function satisfying
\eqn{
\zeta_k(0)=A_k
}
and
\[
\zeta_k'(t)=\begin{cases}
0, & t\in[-T,t_k),\\
\frac1{2}A_{k-1}\zeta_k(t)-\delta_k\zeta_{k+1}(t)^2\,\mathbbm 1_{k\leq K-1},
& t\in[t_k,0].
\end{cases}
\]
Then, let $\zeta_0$ be such that
\eqn{
\zeta_0'=-\nu N_0^2\zeta_0-\delta_0\zeta_1^2,
\qquad
\zeta_0(0)=A_0.
}
Next define
\eqn{
\eta_0(t)\coloneqq A_0,
\qquad
\eta_k(t)\coloneqq A_k\exp\Bigl(-\int_t^0\zeta_{k-1}(s)\,ds\Bigr),
\qquad 1\leq k\leq K.
}
Finally, define the time-dependent rectangle
\eqn{
\mathcal R_K(t)
\coloneqq \Bigl\{(x_0,\dots,x_K)\in\mathbb R^{K+1}:
\eta_k(t)\leq x_k\leq \zeta_k(t)\ \text{for all }0\leq k\leq K
\Bigr\}.
}
\end{define}

We show the following elementary estimates for the upper and lower limits of the region $\mathcal R_K$. These will amount to bounds on any solution confined to the region.

\begin{lem}[Barrier bounds]\label{barrier_bounds_lemma}
Let $t_k$ be as in \eqref{tk_times_def} and $\eta_k,\zeta_k$ as in Definition~\ref{R_region_definition}. Assume also the relevant parameter inequalities, namely \eqref{inviscid_A_assumptions} in the inviscid case and \eqref{viscous_A_assumptions} in the viscous case. Then for every $1\leq k\leq K$ and $t\in[-T,0]$,
\begin{equation}\label{z_bound}
A_k\exp\Bigl(\frac12A_{k-1}\max\{t,t_k\}\Bigr)\leq \zeta_k(t)\leq 2A_k\exp\Bigl(\frac12A_{k-1}\max\{t,t_k\}\Bigr).
\end{equation}
Moreover,
\begin{equation}\label{z0_bound}
A_0\leq \zeta_0(t)\leq 2A_0
\qquad \forall\, t\in[-T,0].
\end{equation}
For the lower limits, we have
\begin{equation}\label{eta_quarter_bound}
\eta_k(t)\geq \frac34A_k
\qquad
\forall\,0\leq k\leq K-1,\ \forall\, t\in[t_{k+1},0],
\end{equation}
and for every $2\leq k\leq K$,
\begin{equation}\label{eta_global_bound}
\eta_k(t)\geq A_k\exp\Bigl(-5\frac{A_{k-1}}{A_{k-2}}\Bigr)
\qquad
\forall\, t\in[-T,0].
\end{equation}
\end{lem}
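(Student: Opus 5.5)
The plan is to prove the four estimates in the order in which the barriers depend on one another. The upper barriers $\zeta_k$ are defined by a backward recursion from $k=K$ down to $k=0$, since $\zeta_k'$ involves $\zeta_{k+1}$. So I would prove \eqref{z_bound} by downward induction on $k$ and then deduce \eqref{z0_bound}. The lower barriers $\eta_k$ are explicit integrals of $\zeta_{k-1}$, so \eqref{eta_quarter_bound} and \eqref{eta_global_bound} will follow from \eqref{z_bound} and \eqref{z0_bound} by direct integration.

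\textbf{Upper barriers.} For $k=K$ the indicator vanishes, so $\zeta_K(t)=A_K\exp(\frac12A_{K-1}\max\{t,t_K\})$ exactly. For $k\le K-1$, define $\tilde\zeta_k(t)=A_k\exp(\frac12 A_{k-1}\max\{t,t_k\})$, the solution with the $\delta_k$ term removed. Integrating the linear ODE backward from $0$ gives, for $t\ge t_k$,
\[
\zeta_k(t)=\tilde\zeta_k(t)+\int_t^0 e^{-\frac12A_{k-1}(s-t)}\delta_k\zeta_{k+1}(s)^2\,ds .
\]
This shows $\zeta_k\ge\tilde\zeta_k$, which is the lower bound in \eqref{z_bound}; for $t<t_k$, $\zeta_k$ is frozen at $\zeta_k(t_k)$. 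For the upper bound I use the induction hypothesis $\zeta_{k+1}\le 2\tilde\zeta_{k+1}$ and bound the integral by $4\delta_k\int_{t_k}^0 A_{k+1}^2 e^{A_k s}\,ds\le 4\delta_kA_{k+1}^2/A_k$, while $\tilde\zeta_k(t)\ge A_k e^{-c/2}\gtrsim A_k$, since $t_k=-cA_{k-2}^{-1}$ gives $\frac12A_{k-1}|t_k|=\frac c2 A_{k-1}/A_{k-2}$, which is large rather than small. Here is the subtlety: near $t_k$, $\tilde\zeta_k$ is exponentially small, of size $A_k\exp(-\frac c2A_{k-1}/A_{k-2})$, so the perturbation must be compared pointwise. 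I would instead bound the integral from $t$ to $0$ and use $e^{A_k s}\le e^{A_k t}$ for $s\geq t$, giving a contribution $\lesssim\delta_kA_{k+1}^2A_k^{-1}e^{A_kt}$. This is at most $\tilde\zeta_k(t)$ times $\delta_kA_{k+1}^2A_k^{-2}e^{(A_k-\frac12A_{k-1})t}\le\delta_k(A_{k+1}/A_k)^2$. Since $\delta_k=(N_k/N_{k+1})^{2\alpha}$ and $\beta<\alpha$, this is $(N_k/N_{k+1})^{2(\alpha-\beta)}$, which is small by \eqref{exp_small}. That closes the induction with the constant $2$. For $\zeta_0$, the same comparison gives $\int_t^0(\nu N_0^2\zeta_0+\delta_0\zeta_1^2)\lesssim T(\nu N_0^2A_0+\delta_0A_1^2)$. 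With $T=c/A_0$, this is at most $c(\nu N_0^{2}+\delta_0A_1^2/A_0)$, and it is $\le\log 2$ because $\beta>2b\ge 2$ in the viscous case ensures $N_0^2\ll A_0$ after adjusting the constant. A matching estimate gives $\zeta_0\ge A_0$, provided I interpret the direction carefully: both terms are negative, so backward in time $\zeta_0$ grows.

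\textbf{Lower barriers.} For $t\in[t_{k+1},0]$ we have $\int_t^0\zeta_{k-1}\le 2A_{k-1}|t_{k+1}|=2cA_{k-1}/A_{k-1}=2c$, using \eqref{z_bound} crudely (or \eqref{z0_bound} when $k=1$). This gives $\eta_k\ge A_ke^{-2c}\ge\frac34A_k$ for small $c$. For \eqref{eta_global_bound}, split $\int_{-T}^0\zeta_{k-1}$ into $[t_{k-1},0]$ and $[-T,t_{k-1}]$. On the first piece, $\int 2A_{k-1}e^{\frac12A_{k-2}s}\,ds\le4A_{k-1}/A_{k-2}$. On the second, $\zeta_{k-1}$ is frozen at $\le2A_{k-1}e^{-\frac c2A_{k-2}/A_{k-3}}$ over a time of length $\le T$. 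This contributes $\le 2A_{k-1}T\,e^{-\frac c2 A_{k-2}/A_{k-3}}$, which is $\le A_{k-1}/A_{k-2}$ by \eqref{exp_small}, because $T=c/A_0$ is a fixed constant and the exponential is super-small. For $k=2,3$ the handling of $t_1=t_2=-T$ is direct. The total is $\le 5A_{k-1}/A_{k-2}$.

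The main obstacle is the pointwise control of the $\delta_k\zeta_{k+1}^2$ forcing in the upper-barrier induction near $t_k$, where $\zeta_k$ is tiny. The key is the exponent comparison $A_k>\frac12A_{k-1}$, which makes the high-high-low contribution decay backward faster than $\tilde\zeta_k$. I would also double-check the viscous bound for $\zeta_0$ against the parameter constraints.
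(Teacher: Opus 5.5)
There is a genuine gap in the upper-barrier induction. It occurs on $t\in[t_k,t_{k+1})$ for $2\le k\le K-1$; for $k=1$ this interval is empty since $t_1=t_2$.

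In your pointwise comparison you bound the forcing by $\zeta_{k+1}(s)^2\le 4A_{k+1}^2e^{A_ks}$. But the induction hypothesis is $\zeta_{k+1}(s)\le 2A_{k+1}\exp(\frac12A_k\max\{s,t_{k+1}\})$, and the $\max$ cannot be dropped. By definition $\zeta_{k+1}$ is frozen on $[-T,t_{k+1}]$ at the value $\zeta_{k+1}(t_{k+1})\ge A_{k+1}e^{-\frac c2A_k/A_{k-1}}$. So on $[t_k,t_{k+1})$ the forcing $\delta_k\zeta_{k+1}^2$ does not decay backward at all. Meanwhile $\tilde\zeta_k$ keeps decaying, down to $A_ke^{-\frac c2A_{k-1}/A_{k-2}}$ at $t=t_k$.

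The mechanism you call the key, $A_k>\frac12A_{k-1}$, therefore says nothing in exactly the region you flagged as the main obstacle ("near $t_k$"). Once repaired, your argument only covers $t\ge t_{k+1}$, which is the paper's Case~2.

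The missing input is \eqref{ratios}. Split the Duhamel integral at $t_{k+1}$. The frozen part contributes at most $4\delta_kA_{k+1}^2e^{-cA_k/A_{k-1}}|t_k|$, with $|t_k|=c/A_{k-2}$. Dividing by $\tilde\zeta_k(t)\ge A_ke^{-\frac c2A_{k-1}/A_{k-2}}$ gives a relative size
\[
\frac{4c\,\delta_kA_{k+1}^2}{A_kA_{k-2}}\exp\Bigl(-c\frac{A_k}{A_{k-1}}+\frac c2\frac{A_{k-1}}{A_{k-2}}\Bigr).
\]
This is small only because the consecutive ratios themselves separate: $A_{k-1}/A_{k-2}\le\frac{c}{100}A_k/A_{k-1}$. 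That makes the exponent at most $-\frac c2A_k/A_{k-1}$, so \eqref{exp_small} absorbs the polynomial prefactor. This is an essential use of the super-exponential scales and does not follow from $A_k>\frac12A_{k-1}$.

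There are also two smaller slips.

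First, in Case~2 the inequality $e^{A_ks}\le e^{A_kt}$ for $s\ge t$ is reversed. The resulting bound $\delta_kA_{k+1}^2A_k^{-1}e^{A_kt}$ is false, because the integrand is largest at $s=0$. The correct computation is $\int_t^0e^{-\frac12A_{k-1}(s-t)}e^{A_ks}\,ds\le e^{\frac12A_{k-1}t}/(A_k-\frac12A_{k-1})$. It gives the same relative size $\lesssim\delta_k(A_{k+1}/A_k)^2$, so your Case~2 conclusion stands.

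Second, in the $\zeta_0$ estimate the quantity to make small is $c\nu N_0^2/A_0+c\,\delta_0A_1^2/A_0^2$, not $c(\nu N_0^2+\delta_0A_1^2/A_0)$ compared with $\log 2$. Your reasons ($\beta>2$ in the viscous case, $\beta<\alpha$) do give the correct version.

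The lower-barrier estimates are correct. Your bound $\int_t^0\zeta_{k-1}\le 2A_{k-1}|t_{k+1}|=2c$ is a slightly more direct route to \eqref{eta_quarter_bound} than the paper's; for $k=1$ it uses \eqref{z0_bound} and $|t_2|=c/A_0$.
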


\begin{proof}
For $k=K$ we have an explicit formula
\eqn{
\zeta_K(t)=A_K\exp\Bigl(\frac12A_{K-1}\max\{t,t_K\}\Bigr).
}
This gives \eqref{z_bound} for $k=K$.

Now fix $1\leq k\leq K-1$. Since both $\zeta_k$ and the bounds claimed in \eqref{z_bound} are constant on $[-T,t_k]$, it suffices to
estimate it on $[t_k,0]$. On this interval, we have the Duhamel formula
\eqn{
\zeta_k(t)=A_ke^{\frac12A_{k-1}t}+\mathcal N_k(t),\quad\mathcal N_k(t)\coloneqq
\delta_k\int_t^0
e^{-\frac12A_{k-1}(s-t)}\zeta_{k+1}(s)^2\,ds.
}
The lower bound in \eqref{z_bound} is immediate. For the upper bound, assume inductively that \eqref{z_bound} holds for $k+1$, recalling that the $k=K$ estimates have already been established. We distinguish two cases.

\smallskip
\noindent
\textit{Case 1: $t\in[t_k,t_{k+1})$.}
Using the induction hypothesis and splitting at $t_{k+1}$, we obtain
\begin{align*}
\mathcal N_k(t)&\lesssim
\delta_kA_{k+1}^2
\int_t^{t_{k+1}}
e^{-\frac12A_{k-1}(s-t)}e^{A_kt_{k+1}}\,ds+\delta_kA_{k+1}^2
\int_{t_{k+1}}^0
e^{-\frac12A_{k-1}(s-t)}e^{A_ks}\,ds.
\end{align*}
The first term is controlled by
\eqn{
\delta_k A_{k+1}^2|t_k|e^{\frac12A_{k-1}t}e^{A_kt_{k+1}-\frac12A_{k-1}t}\lesssim \left(\frac{\delta_kA_{k+1}^2}{A_kA_{k-2}}e^{-cA_k/A_{k-1}+\frac c2A_{k-1}/A_{k-2}}\right)A_ke^{\frac12A_{k-1}t}
}
where the first factor can be made arbitrarily small with the choice of $N_0$, using \eqref{exp_small} and \eqref{ratios}. We point out that if $k\leq1$, then $t_k=t_{k+1}=-T$ so this case is vacuous; thus we may assume $k\geq2$, under which assumption the quantity $A_{k-2}$ is well-defined. 

For the second term,
\eqn{
\delta_kA_{k+1}^2
\int_{t_{k+1}}^0
e^{-\frac12A_{k-1}(s-t)}e^{A_ks}\,ds
\lesssim
\left(\delta_k\frac{A_{k+1}^2}{A_k^2}\right)A_ke^{\frac12A_{k-1}t}.
}
By definition, we have
\eqn{
\delta_k\frac{A_{k+1}^2}{A_k^2}&=\left(\frac{N_{k+1}}{N_k}\right)^{-2\alpha+2\beta}=N_0^{-2(\alpha-\beta)(b-1)b^k}. 
}
Using the assumption $\beta<\alpha$ from \eqref{inviscid_A_assumptions} and \eqref{viscous_A_assumptions}, we find that this quantity too can be made small with the choice of $N_0$. Combining the two bounds on $\mathcal N_k(t)$,
\eqn{
\zeta_k(t)\leq (1+o_{N_0\to\infty}(1))A_ke^{\frac12A_{k-1}t}
}
and we reach the desired conclusion.

\smallskip
\noindent
\textit{Case 2: $t\in[t_{k+1},0]$.}
Here $\max\{s,t_{k+1}\}=s$ for every $s\in[t,0]$, so by the induction hypothesis,
\begin{align*}
\mathcal N_k(t)&\lesssim
\delta_kA_{k+1}^2
\int_t^0e^{-\frac12A_{k-1}(s-t)}e^{A_ks}\,ds\lesssim
\left(\delta_k\frac{A_{k+1}^2}{A_k^2}\right)A_ke^{\frac12A_{k-1}t}.
\end{align*}
Again, after increasing $N_0$ if necessary,
\eqn{
\zeta_k(t)\leq 2A_ke^{\frac12A_{k-1}t}
\qquad \forall\, t\in[t_{k+1},0].
}
This completes the downward induction and proves \eqref{z_bound}.

For $k=0$, integrating the equation for $\zeta_0$ and using \eqref{z_bound} with
$k=1$, we get
\begin{align*}
\zeta_0(t)&=e^{\nu N_0^2(-t)}A_0+\delta_0\int_t^0e^{\nu N_0^2(s-t)}\zeta_1(s)^2\,ds\\
&\leq e^{\nu N_0^2T}\left(A_0+4\delta_0A_1^2\int_{-T}^0e^{A_0s}\,ds\right)\\
&\leq e^{\nu cN_0^2/A_0}\left(A_0+4\frac{\delta_0A_1^2}{A_0}\right)
\end{align*}
and conclude in the same manner. Note that in the viscous case $\nu=1$, we use \eqref{viscous_A_assumptions} to estimate $cN_0^2/A_0= cN_0^{2-\beta}\leq1/10$, say. 

We now estimate $\eta_k$. By definition,
\eqn{
\eta_k(t)=A_k\exp\Bigl(-\int_t^0\zeta_{k-1}(s)\,ds\Bigr).
}
To prove \eqref{eta_quarter_bound}, fix $1\leq k\leq K-1$ and $t\geq t_{k+1}$. (The $k=0$ case is trivial.)
If $k=1$, then by \eqref{z0_bound},
\eqn{
\int_t^0\zeta_0(s)\,ds\leq 2A_0(0-t)\leq 2A_0T=2c
}
which can be made as small as needed by the choice of $c>0$.

Now let $k\geq 2$. Using \eqref{z_bound},
\eqn{
\int_t^0\zeta_{k-1}(s)\,ds\leq 2A_{k-1}\int_t^0e^{\frac12A_{k-2}s}\,ds=4\frac{A_{k-1}}{A_{k-2}}
\Bigl(1-e^{\frac12A_{k-2}t}\Bigr).
}
Since $t\geq t_{k+1}=-cA_{k-1}^{-1}$,
\eqn{
1-e^{\frac12A_{k-2}t}\leq 1-e^{-\frac c2\frac{A_{k-2}}{A_{k-1}}}\leq \frac c2\frac{A_{k-2}}{A_{k-1}}.
}
Therefore
\eqn{
\int_t^0\zeta_{k-1}(s)\,ds\leq 2c,
}
so
\eqn{
\eta_k(t)\geq A_ke^{-2c},
}
and we conclude \eqref{eta_quarter_bound} once again by the choice of $c>0$.

Finally, for the global-in-time lower bound, let $k\geq 2$ and $t\in[-T,0]$. By positivity, we may split
\eqn{
\int_t^0\zeta_{k-1}(s)\,ds\leq\mathbbm1_{t<t_{k-1}}\int_t^{t_{k-1}}\zeta_{k-1}(s)\,ds+\int_{t_{k-1}}^0\zeta_{k-1}(s)\,ds.
}
If $k=2$, then $t_{k-1}=t_1=-T$, so the first term is absent. For the second term,
\eqn{
\int_{-T}^0\zeta_1(s)\,ds\leq 2A_1\int_{-T}^0e^{A_0s/2}\,ds\leq 4\frac{A_1}{A_0},
}
which proves \eqref{eta_global_bound} for $k=2$.

Assume now that $k\geq 3$. Since $\zeta_{k-1}$ is constant on $[-T,t_{k-1}]$,
\eqn{
\int_t^{t_{k-1}}\zeta_{k-1}(s)\,ds&\lesssim A_{k-1}(t_{k-1}-t)e^{\frac12A_{k-2}t_{k-1}}\\
&\lesssim A_{k-1}T\,e^{-\frac c2\frac{A_{k-2}}{A_{k-3}}}\\
&\lesssim \left(\frac{A_{k-2}}{A_0}e^{-\frac c2\frac{A_{k-2}}{A_{k-3}}}\right)\frac{A_{k-1}}{A_{k-2}}.
}
The prefactor can be made arbitrarily small with the choice of $N_0$ as in \eqref{exp_small} and we arrive at
\eqn{
\int_t^{t_{k-1}}\zeta_{k-1}(s)\,ds\leq \frac{A_{k-1}}{A_{k-2}}.
}
For the second term,
\eqn{
\int_{t_{k-1}}^0\zeta_{k-1}(s)\,ds\leq 2A_{k-1}\int_{t_{k-1}}^0e^{\frac12A_{k-2}s}\,ds\leq 4\frac{A_{k-1}}{A_{k-2}}.
}
Hence
\eqn{
\int_t^0\zeta_{k-1}(s)\,ds\leq 5\frac{A_{k-1}}{A_{k-2}}.
}
This yields \eqref{eta_global_bound}.
\end{proof}

\subsection{Inviscid blow-up}

In this section, we prove Theorem~\ref{inviscid_euler_blowup_theorem}. By translating in time, we may arrange that the blow-up occurs at $t=0$ and the initial data is specified at $t=-T$ for $T>0$ defined as in \S\ref{trapping_subsec}.
For each $K\in\mathbb N$, we consider the Galerkin truncation of \eqref{x_system_nondimensionalized} up to mode $K$; namely,
\begin{equation}\label{truncated_inviscid}
x_k'=x_{k-1}x_k
-
\delta_k x_{k+1}^2\,\mathbbm 1_{k\leq K-1},
\qquad 0\leq k\leq K,
\end{equation}
with terminal data prescribed at time $t=0$.

\begin{proposition}\label{truncated_barrier_prop}
Fix $K\in\mathbb N$ and let $\eta_k,\zeta_k,\mathcal R_K$ be as in
Definition~\ref{R_region_definition}. Then there exists a unique solution
\eqn{
x^K=(x_0^K,\dots,x_K^K)\in C^1([-T,0];\mathbb R^{K+1})
}
to \eqref{truncated_inviscid} with terminal data
\eqn{
x_k^K(0)=A_k,\qquad 0\leq k\leq K
}
where $T$ is as defined in \eqref{tk_times_def}. Moreover,
\begin{equation}\label{uniform_truncated_bounds}
0\leq x_k^K(t)\leq 2A_k
\exp\Bigl(\frac12A_{k-1}\max\{t,t_k\}\Bigr)
\quad (1\leq k\leq K),
\quad
0\leq x_0^K(t)\leq 2A_0.
\end{equation}
\end{proposition}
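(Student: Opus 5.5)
The plan is a backward-in-time trapping argument: show that the region $\mathcal R_K(t)$ from Definition~\ref{R_region_definition} is invariant under the backward flow of \eqref{truncated_inviscid}, then read off \eqref{uniform_truncated_bounds} from Lemma~\ref{barrier_bounds_lemma}. The right-hand side of \eqref{truncated_inviscid} is a polynomial vector field on $\mathbb R^{K+1}$, so Picard--Lindel\"of gives a unique local $C^1$ solution near $t=0$ with terminal data $x_k(0)=A_k$. This data lies in $\mathcal R_K(0)$, since $\eta_k(0)=\zeta_k(0)=A_k$ for every $k$. Extension to all of $[-T,0]$ then follows once we know the solution stays in the compact set $\mathcal R_K(t)$, by the standard continuation criterion. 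Uniqueness is automatic from local Lipschitz continuity.

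\textbf{Invariance.} Let $t_*=\inf\{t\in[-T,0]: x(s)\in\mathcal R_K(s)\ \forall s\in[t,0]\}$ and suppose $t_*>-T$. To avoid tangency issues at $t=0$, I would first enlarge the region slightly: replace $\zeta_k$ by $(1+\epsilon)\zeta_k$ and $\eta_k$ by $(1-\epsilon)\eta_k$ with a strict comparison, and then let $\epsilon\to0$. Alternatively, I would compare directly using integrating factors. Either way the argument reduces to checking the following inequalities whenever $x(t)\in\mathcal R_K(t)$. Note that in backward time, $x_k\le\zeta_k$ is preserved if $x_k'\ge\zeta_k'$ at contact, and $x_k\ge\eta_k$ is preserved if $x_k'\le\eta_k'$ at contact.

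\emph{Lower barriers.} For $k\ge1$ we have $\eta_k'=\zeta_{k-1}\eta_k$. At contact $x_k=\eta_k$, the bounds $x_{k-1}\le\zeta_{k-1}$ and $-\delta_kx_{k+1}^2\le0$ give $x_k'\le\zeta_{k-1}x_k=\eta_k'$, which is the right sign. It is cleaner to phrase this as $\frac{d}{dt}\log x_k\le\zeta_{k-1}$; integrating from $t$ to $0$ yields $x_k(t)\ge\eta_k(t)$ directly. For $k=0$ we have $x_0'=-\delta_0x_1^2\le0$, so $x_0(t)\ge x_0(0)=A_0=\eta_0$.

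\emph{Upper barriers.} This is the main obstacle. For $k\ge1$ we must show $x_k'\ge\zeta_k'$ at contact, i.e.\ $x_{k-1}\zeta_k-\delta_kx_{k+1}^2\ge\zeta_k'$. On $[t_k,0]$ we have $\zeta_k'=\tfrac12A_{k-1}\zeta_k-\delta_k\zeta_{k+1}^2$. Since $x_{k+1}\le\zeta_{k+1}$, it suffices to have $x_{k-1}\ge\tfrac12A_{k-1}$, and this follows from $x_{k-1}\ge\eta_{k-1}\ge\tfrac34A_{k-1}$ by \eqref{eta_quarter_bound}, which is valid precisely on $[t_k,0]$; for $k=1$ we instead use $x_0\ge A_0$. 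On $[-T,t_k)$ we have $\zeta_k'=0$, and we need $x_{k-1}\zeta_k\ge\delta_kx_{k+1}^2$. Here I would use $x_{k-1}\ge\eta_{k-1}$ together with \eqref{eta_global_bound} (or \eqref{eta_quarter_bound} for small $k$), $\zeta_k\ge A_k\exp(\tfrac12A_{k-1}t_k)$ from \eqref{z_bound}, and $x_{k+1}\le\zeta_{k+1}\le2A_{k+1}\exp(\tfrac12A_kt_{k+1})$. The required inequality then becomes
\[\delta_k\frac{A_{k+1}^2}{A_{k-1}A_k}\exp\Bigl(-cA_k/A_{k-1}+\tfrac c2A_{k-1}/A_{k-2}+5A_{k-2}/A_{k-3}\Bigr)\ll1,\]
which holds by \eqref{exp_small} and \eqref{ratios}: the decaying exponential dominates. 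For $k=0$, $\zeta_0'=-\delta_0\zeta_1^2\le-\delta_0x_1^2=x_0'$, as needed. A cleaner way to package all of this is the integrated (Duhamel) form of each comparison, which avoids the degenerate contact at $t=0$.

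Having shown invariance, \eqref{uniform_truncated_bounds} follows from $0<\eta_k\le x_k\le\zeta_k$ together with the upper bounds \eqref{z_bound} and \eqref{z0_bound} of Lemma~\ref{barrier_bounds_lemma}.
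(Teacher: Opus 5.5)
Your proposal is correct and is essentially the paper's proof: the same backward first-exit argument for $\mathcal R_K(t)$, with the same contact inequalities checked via \eqref{eta_quarter_bound}, \eqref{eta_global_bound}, \eqref{z_bound}, \eqref{ratios} and \eqref{exp_small}, and only cosmetic differences (the paper gets continuation to $[-T,0]$ from conservation of the truncated energy $e_K$ rather than from confinement in $\mathcal R_K$, and for $t<t_k$ it bounds $x_k$ below by $\eta_k$ rather than using $x_k=\zeta_k$ with the lower bound in \eqref{z_bound}). One caution on the degenerate contact at $t=0$: rescaling the barriers by constant factors $1\pm\epsilon$ does not by itself give strict inequalities at the lower barriers or at $\zeta_0$ (for instance, allowing $x_{K-1}$ up to $(1+\epsilon)\zeta_{K-1}$ lets $x_K'$ exceed $\frac{d}{dt}[(1-\epsilon)\eta_K]$ at contact), so use instead a time-weighted perturbation such as $\zeta_k(1+\epsilon e^{-Mt})$, $\eta_k(1-\epsilon e^{-Mt})$ with $M$ large, or a Gronwall argument on the total violation --- a point the paper likewise leaves to ``a standard argument.''
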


\begin{proof}
By standard ODE theory, there is a unique local solution backward in time from $t=0$. To continue the solution backward to $t=-T$, we consider the truncated energy
\eq{\label{truncated_e}
e_K(t)&=\sum_k\frac12N_k^{-2\alpha}(x_k^K)^2(t).
}
Then, for the lifetime of a local solution $x_k^K$,
\eqn{
e_K'(t)&=\sum_{k\leq K}N_k^{-2\alpha}x_{k-1}x_k^2-\delta_kN_k^{-2\alpha}x_kx_{k+1}^2\mathbbm1_{k\leq K-1}=0,
}
using the recurrence relation $\delta_kN_k^{-2\alpha}=N_{k+1}^{-2\alpha}$, where we have omitted the superscript $K$'s to avoid clutter. Thus $|x_k^K|\lesssim_{K}1$ so the solutions can be smoothly continued to all $t\in(-\infty,0]$.

We claim
\eq{\label{stays_trapped_in_R}
x^K(t)\in \mathcal R_K(t)
\qquad \forall\, t\in[-T,0].
}
Let us continue to omit the dependence on $K$. To argue \eqref{stays_trapped_in_R}, recall that $\mathcal R_K(t)$ is characterized by the inequality $\eta_k(t)\leq x_k(t)\leq\zeta_k(t)$ for all $k\leq K$. At $t=0$, this clearly holds as an equality. By a standard argument based on the first time of exit from the region $\mathcal R_K(t)$, it suffices to observe that
\begin{itemize}
    \item $x_k'(t)\leq\eta_k'(t)$ when $x(t)\in\mathcal R_K(t)$ and $x_k(t)=\eta_k(t)$
    \item $x_k'(t)\geq\zeta_k'(t)$ when $x(t)\in\mathcal R_K(t)$ and $x_k(t)=\zeta_k(t)$.
\end{itemize}
Note that the inequalities between the derivatives are reversed because the argument is backward in time.

First we argue that $x_k=\zeta_k$ implies $x_k'\geq\zeta_k'$. This is clear from the definition for $k=0$. Consider the case where $k\geq1$ and $t\in[t_k,0]$. Then, from $x(t)\in\mathcal R_K(t)$, we have
\eqn{
x_k'&=x_{k-1}x_k-\delta_kx_{k+1}^2\mathbbm1_{k\leq K-1}\geq \eta_{k-1}\zeta_k-\delta_k\zeta_{k+1}^2\mathbbm1_{k\leq K-1}
}
and the claim follows from \eqref{eta_quarter_bound}.

Consider the remaining case, in which $k\geq1$ and $t\in[-T,t_k)$. By definition then, $\zeta_k'=0$. We may assume $k\geq3$ because otherwise, $[-T,t_k)$ is empty. Further, the $k=K$ case is trivial because the negative term in \eqref{truncated_inviscid} vanishes. The requirement is then
\eqn{
x_k'&=x_{k-1}x_k-\delta_k x_{k+1}^2\geq0
}
for $k\in\{1,2,\ldots,K-1\}$. Since $x$ has positive components, we estimate the ratio. By Lemma~\ref{barrier_bounds_lemma}, and using that $x\in\mathcal R_K$, we have
\eqn{
\frac{x_{k-1}x_k}{\delta_kx_{k+1}^2}\geq\frac{\eta_{k-1}\eta_k}{\delta_k\zeta_{k+1}^2}\gtrsim \frac{A_{k-1}A_k}{\delta_kA_{k+1}^2}\exp(-5\frac{A_{k-1}}{A_{k-2}}-5\frac{A_{k-2}}{A_{k-3}}-A_kt_{k+1}).
}
By \eqref{ratios}, the exponent obeys
\eqn{
-5\frac{A_{k-1}}{A_{k-2}}-5\frac{A_{k-2}}{A_{k-3}}-A_kt_{k+1}&=-5\frac{A_{k-1}}{A_{k-2}}-5\frac{A_{k-2}}{A_{k-3}}+c\frac{A_k}{A_{k-1}}\geq\frac c2\frac{A_k}{A_{k-1}}.
}
Choosing $N_0$ large as necessary so that the exponential defeats the polynomial prefactor as in \eqref{exp_small}, we conclude that $x_{k-1}x_k\geq\delta_kx_{k+1}^2$.

It is immediate from the definition that $x_k=\eta_k$ implies $x_k'\leq\eta_k'$. This completes the proof of \eqref{stays_trapped_in_R}, from which the claim \eqref{uniform_truncated_bounds} is immediate.
\end{proof}

\begin{proof}[Proof of Theorem~\ref{inviscid_euler_blowup_theorem}]
Fix $s>0$ and choose $\beta$ as in \eqref{inviscid_A_assumptions}. Let $A_k=N_k^\beta$ and let
$T=c/A_0$ be as above. For each $K$, let $x^K$ be the solution given by
Proposition~\ref{truncated_barrier_prop}. From \eqref{truncated_inviscid} and the uniform-in-$K$ bound \eqref{uniform_truncated_bounds}, we have $\|x_k^K\|_{C^1([-T,0])}\lesssim_k1$ from which one further bootstraps $\|x_k^K\|_{C^2([-T,0])}\lesssim_k1$. By diagonalization and Arzel\`a--Ascoli, one can pass to a subsequence that converges in $C^1$ for each $k$. Because the system \eqref{x_system_nondimensionalized} is local in $k$, i.e., $x_k'$ depends only on $x_{k-1},x_k,x_{k+1}$, this suffices to conclude that the limit $x_k$ obeys \eqref{x_system_nondimensionalized} with
\eqn{
x_k(0)=A_k
\qquad \forall\, k\geq 0.
}
Thus, $\|X(0)\|_{\mathcal{C}^s}=\sup_kN_k^{\beta-\alpha+s}=\infty$ due to the assumption $\beta>-s+\alpha$. Moreover, from \eqref{uniform_truncated_bounds}, we have
\eq{\label{final_bounds}
0\leq X_k(t)\leq2\frac{A_k}{N_k^\alpha}\exp\Big(\frac12A_{k-1}\max\{t,t_k\}\Big)\qquad \forall k\geq1.
}

What remains is to show that $X(t)\in \mathcal{C}^\infty$ for $t\in[-T,0)$, from which we can infer that a finite-time blow-up indeed occurs at $t=0$. Toward this, fix any $\sigma>0$ and $t\in[-T,0)$, and let $k_*\in\mathbb N$ such that $t_{k_*}>t$. To verify $X(t)\in \mathcal{C}^\sigma$, it suffices to check the high frequencies. By \eqref{final_bounds} and \eqref{exp_small}, we have
\eqn{
\sup_{k>k_*}N_k^\sigma|X_k|\lesssim\sup_{k>k_*}A_kN_k^{\sigma-\alpha}\exp\Big(-\frac c2\frac{A_{k-1}}{A_{k-2}}\Big)<\infty.
}
\end{proof}

\subsection{Viscous blow-up}

As discussed in \S\ref{viscous_discussion}, we proceed in the viscous case by introducing a mild force to prevent the dissipation from acting on each mode before its activation time. To this end, fix a decreasing cutoff function $\rho\in C_c^\infty([0,1))$ with $\rho\equiv1$ on $[0,\frac12]$, and define $\rho_k(t)\coloneqq \rho(t/t_k)$ for $k\geq1$, and $\rho_0\equiv1$. Thus, 
\eqn{
\rho_k\equiv1\text{ for }t\in[t_k/2,0],\quad \supp\rho_k\subset[t_k,0]\qquad\forall k\geq1.
}
Consider the system
\begin{equation}\label{truncated_viscous}
x_k'=-\rho_k(t)N_k^2x_k+x_{k-1}x_k
-
\delta_k x_{k+1}^2\,\mathbbm 1_{k\leq K-1},
\qquad 0\leq k\leq K
\end{equation}
which is a truncation of the forced viscous system \eqref{x_system_nondimensionalized} with $\nu=1$, having set $g_k=(1-\rho_k(t))N_k^2x_k$. The strategy toward Theorem~\ref{viscous_ns_blowup_theorem} is to construct solutions of \eqref{truncated_viscous} using nearly the same trapping region as in the inviscid case, then to estimate the resulting force. We claim that a statement identical to Proposition~\ref{truncated_barrier_prop} indeed holds for solutions of \eqref{truncated_viscous}.

\begin{proposition}\label{truncated_barrier_prop_2}
Fix $K\in\mathbb N$ and let $\eta_k,\zeta_k,\mathcal R_K$ be as in
Definition~\ref{R_region_definition}. Then there exists a unique solution
\eqn{
x^K=(x_0^K,\dots,x_K^K)\in C^1([-T,0];\mathbb R^{K+1})
}
to \eqref{truncated_viscous} with terminal data
\eqn{
x_k^K(0)=A_k,\qquad 0\leq k\leq K.
}
Moreover,
\begin{equation}\label{uniform_truncated_bounds_viscous}
0\leq x_k^K(t)\leq 2A_k
\exp\Bigl(\frac12A_{k-1}\max\{t,t_k\}\Bigr)
\quad (1\leq k\leq K),
\qquad
0\leq x_0^K(t)\leq 2A_0.
\end{equation}
\end{proposition}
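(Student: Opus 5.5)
The plan is to follow the proof of Proposition~\ref{truncated_barrier_prop} line by line. The only new ingredient is the damping term $-\rho_k(t)N_k^2x_k$, which must be checked against the trapping region $\mathcal R_K$. The region is unchanged, and Lemma~\ref{barrier_bounds_lemma} already covers the viscous case, including \eqref{z0_bound} for the damped $\zeta_0$.

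\emph{Step 1: existence on $[-T,0]$.} Local existence and uniqueness backward from $t=0$ is standard ODE theory. For continuation, use the truncated energy $e_K$ from \eqref{truncated_e}. The nonlinear terms cancel exactly as before, leaving
\[
e_K'(t)=-\sum_{k\le K}\rho_k(t)N_k^{2-2\alpha}(x_k^K)^2 .
\]
This gives $|e_K'|\le 2N_K^2e_K$, since $0\le\rho_k\le1$. Gr\"onwall's inequality run backward in time then bounds $e_K$ on $[-T,0]$ by $e^{2N_K^2T}e_K(0)$. Hence $|x_k^K|\lesssim_K1$, and the solution extends to all of $[-T,0]$.

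\emph{Step 2: trapping, $x^K(t)\in\mathcal R_K(t)$.} We use the same first-exit argument as before, with the derivative inequalities reversed because we run backward in time.

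\begin{itemize}
\item \emph{Lower barriers.} For $k\ge1$, suppose $x_k=\eta_k$. Both the damping term and the high-high-low term are nonpositive, so
\[
x_k'\le x_{k-1}x_k\le\zeta_{k-1}\eta_k=\eta_k'.
\]
For $k=0$ we have $\eta_0'=0$, while $x_0'=-N_0^2x_0-\delta_0x_1^2\le0$.
\item \emph{Upper barrier for $k=0$.} Since $\rho_0\equiv1$, if $x_0=\zeta_0$ then $x_0'=-N_0^2\zeta_0-\delta_0x_1^2\ge\zeta_0'$, using $x_1\le\zeta_1$.
\item \emph{Upper barriers for $k\ge1$ and $t\in[-T,t_k)$.} Here $\rho_k\equiv0$, because $\supp\rho_k\subset[t_k,0]$. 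The equation for $x_k$ is then literally the inviscid one, and the earlier ratio argument based on \eqref{eta_global_bound}, \eqref{ratios} and \eqref{exp_small} applies verbatim. (This uses only the barrier bounds on $x_{k\pm1}$, which are the same.)
\item \emph{Upper barriers for $k\ge1$ and $t\in[t_k,0]$.} Suppose $x_k=\zeta_k$. Using $\rho_k\le1$, \eqref{eta_quarter_bound} and $x_{k+1}\le\zeta_{k+1}$,
\[
x_k'\ge(\eta_{k-1}-N_k^2)\zeta_k-\delta_k\zeta_{k+1}^2\mathbbm1_{k\le K-1}\ge\Bigl(\tfrac34A_{k-1}-N_k^2\Bigr)\zeta_k-\delta_k\zeta_{k+1}^2\mathbbm1_{k\le K-1}.
\]
It therefore suffices to have $N_k^2\le\frac14A_{k-1}$. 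Since $N_k^2=N_{k-1}^{2b}$ and $A_{k-1}=N_{k-1}^{\beta}$, this reads $N_{k-1}^{2b-\beta}\le\frac14$. This holds uniformly in $k$ for $N_0$ large, by the assumption $\beta>2b$ in \eqref{viscous_A_assumptions}. We then get $x_k'\ge\frac12A_{k-1}\zeta_k-\delta_k\zeta_{k+1}^2\mathbbm1_{k\le K-1}=\zeta_k'$.
\end{itemize}

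\emph{Step 3: conclusion.} The bounds \eqref{uniform_truncated_bounds_viscous} follow from $0<\eta_k\le x_k\le\zeta_k$ together with \eqref{z_bound} and \eqref{z0_bound}.

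The only genuinely new point, and the one I expect to be the main obstacle, is the upper-barrier check during the activation window $[t_k,0]$. There the dissipation $N_k^2$ has to be absorbed by the growth rate $\eta_{k-1}\ge\frac34A_{k-1}$. This is exactly where the energy-supercriticality constraint $\beta>2b$ (which forces $\alpha>2$) enters. Everywhere else the cutoff $\rho_k$ makes the damping either vanish or carry a favorable sign.
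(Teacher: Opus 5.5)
Your proposal is correct and follows the paper's proof essentially step for step. It uses the same energy identity $e_K'=-\sum_{k\le K}\rho_kN_k^{2-2\alpha}x_k^2$ with a backward Gr\"onwall bound for continuation, and the same first-exit trapping argument, in which the only new case (the upper barrier for $k\ge1$ on $[t_k,0]$) is closed by $\frac34A_{k-1}-N_k^2\ge\frac12A_{k-1}$ via $\beta>2b$. Your explicit checks of the lower barriers and of $k=0$ (where the damping has a favorable sign or is already built into $\zeta_0$) simply spell out what the paper leaves as ``identical'' to the inviscid case.
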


\begin{proof}
We once again invoke standard ODE theory to obtain a unique local solution backward in time from $t=0$. Define the truncated energy as in \eqref{truncated_e}. Now the energy identity for $x_k^K$ takes the form
\eqn{
e_K'(t)+\sum_{k\leq K}\rho_k(t)N_k^{2-2\alpha}x_k^2&=\sum_{k\leq K}N_k^{-2\alpha}x_{k-1}x_k^2-\delta_kN_k^{-2\alpha}x_kx_{k+1}^2\mathbbm1_{k\leq K-1}=0,
}
once again leaving the dependence on $K$ implicit. It follows that
\eqn{
e_K'(t)&=-\sum_{k\leq K}\rho_k(t)N_k^{2-2\alpha}x_k^2\geq -2N_K^2e_K(t).
}
Thus
\eqn{
e_K(t)\leq e^{-2N_K^2t}e_K(0)\leq e^{2N_K^2T}e_\infty(0)\lesssim_K1
}
for all $t\leq0$ in the lifetime of the local solution, from which it follows that $x_k(t)$ stays bounded on $[-T,0]$, for each $k$. Thus the solutions can be smoothly continued to the whole time interval.

We argue \eqref{uniform_truncated_bounds_viscous} by way of \eqref{stays_trapped_in_R} analogously to the inviscid case. The argument is identical whenever $\rho_k$ vanishes, so we are left with only two cases:
\begin{itemize}
    \item $k=0$ and $t\in[-T,0]$, or
    \item $k\geq1$ and $t\in[t_k,0]$.
\end{itemize}
In the first case, note that inclusion of the viscosity only affects the upper barrier backward in time, and that this effect has already been incorporated into the definition of $\zeta_0(t)$ in Definition~\ref{R_region_definition}; thus the argument is identical. In the second case, from $x(t)\in\mathcal R_K(t)$, we have
\eqn{
	\eta_{k-1}-\rho_k(t)N_k^2\geq \frac34A_{k-1}-N_k^2\geq\frac12A_{k-1},
}
using \eqref{eta_quarter_bound} and that $\beta>2b$, and taking $N_0$ large as needed. Thus,
\eqn{
x_k'&=-\rho_k(t)N_k^2x_k+x_{k-1}x_k-\delta_kx_{k+1}^2\mathbbm1_{k\leq K-1}\geq (\eta_{k-1}-\rho_k(t)N_k^2)\zeta_k-\delta_k\zeta_{k+1}^2\mathbbm1_{k\leq K-1}
}
and the claim follows.
\end{proof}

\begin{proof}[Proof of Theorem~\ref{viscous_ns_blowup_theorem}]
Existence of a solution $x(t)$ of
\eqn{
x_k'=-\rho_k(t)N_k^2x_k+x_{k-1}x_k-\delta_kx_{k+1}^2
}
that blows up at $t=0$ and obeys
\begin{equation}\label{sol_bounds}
0\leq x_k(t)\leq 2A_k
\exp\Bigl(\frac12A_{k-1}\max\{t,t_k\}\Bigr)
\quad (k\geq1),
\qquad
0\leq x_0(t)\leq 2A_0
\end{equation}
is proved exactly as in the inviscid case, using the uniform estimate~\eqref{uniform_truncated_bounds_viscous}. What remains is to estimate the force. Clearly, $x(t)$ satisfies the desired system \eqref{x_system_nondimensionalized} with force
\eqn{
	g_0(t)=0,\qquad g_k(t)=(1-\rho_k(t))N_k^2x_k\quad\text{for }k\geq1.
}
It suffices to show the desired inclusion for $g$, since the force for the original system \eqref{l2_obukhov} comes from the rescaling $f_k=N_k^{-\alpha}g_k$. Observe that $1-\rho_k$ is supported in $(-\infty,t_k/2]$. Thus, for $k\geq3$, \eqref{sol_bounds} gives
\eqn{
	|g_k(t)|&\lesssim \sup_{t\leq t_k/2}N_k^{2}A_k\exp\Big(\frac12A_{k-1}t\Big)=N_k^{2}A_k\exp\Big(-\frac c4\frac{A_{k-1}}{A_{k-2}}\Big),
}
which clearly lies in $\mathcal C^\infty$ by the rapid growth of $A_k$; see \eqref{exp_small}. For the higher derivatives, we apply the Leibniz rule to $g_k(t) = (1-\rho_k(t))N_k^2 x_k(t)$. Recall that $|\d_t^j \rho_k(t)| \lesssim_j |t_k|^{-j} \sim A_{k-2}^j$. Differentiating the system shows that time derivatives of $x_k$ grow at most polynomially in the local amplitudes $(A_{k+i})_{i\leq j}$ and frequencies $(N_{k+i})_{i\leq j}$. As in the $j=0$ case, the $1-\rho_k$ factor restricts the support of $g_k$ to $t \leq t_k/2$, meaning all bounded terms are weighted by the small factor $\exp(-\frac{c}{4}\frac{A_{k-1}}{A_{k-2}})$, which once again defeats the polynomial factors. The extreme decay of the exponential ensures that $g$, and therefore $f_k=N_k^{-\alpha}g_k$, lies in $C_t^\infty([-T,0];\mathcal{C}^\infty)$. Since each $f_k$ vanishes in a neighborhood of $t=0$, we may extend it by $0$ to obtain an $f\in C_t^\infty([-T,\infty);\mathcal{C}^\infty)$ as claimed in Theorem~\ref{viscous_ns_blowup_theorem}.
\end{proof}

\section{Prospects for transfer to the PDE setting}\label{prospects_section}

We conclude with an argument that the Obukhov model with super-exponential frequency growth is a plausible candidate to be embedded in the full 3D Navier--Stokes equations to complete the two-step program mentioned in \S\ref{dyadic_models_sec}. By an ``embedding,'' we mean either of two types of results. First, there is a precise notion of embedding, as appears in Tao's work (\cite{MR3486169}, \cite{MR3702540}, \cite{MR3809006}, etc.), in which there is some map that precisely takes solutions of the ODE system to solutions of the PDE. There is also a more relaxed notion that appeared, for instance, in~\cite{MR5008166} where only particular solutions of a dyadic model are realized by some appropriate projection of the PDE solutions, up to various controlled errors.

As detailed in \S\ref{dyadic_models_sec}, the only other model of the 3D Navier--Stokes equations that is known to blow up is the one invented by Tao in~\cite{MR3486169}. We point out two meaningful advantages of the model \eqref{l2_obukhov} as a candidate for transfer to the Navier--Stokes. The first is that the frequency separation is allowed to be extremely wide, which facilitates minimizing the interactions between the solution components at various scales, and has been instrumental in nearly all multi-scale constructions in fluid mechanics. In comparison, the frequencies in~\cite{MR3486169} cannot be significantly more separated than $N_k\sim 2^k$ due to the use of the KP-type low-low-high interaction $P_k\div u_{k-1}\otimes u_{k-1}$. Without a large parameter to separate the scales, it would seem to be extremely difficult to control errors in an embedding construction.

The other advantage of \eqref{l2_obukhov} is that it contains only nonlinear interactions that are organically represented in the true Euler/Navier--Stokes nonlinearity, as discussed in \S\ref{motivation_section}. The high-high-low interaction $-N_k^\alpha X_{k+1}^2$, corresponding to $P_k\div u_{k+1}\otimes u_{k+1}$, can act in any prescribed way, as was discovered by De Lellis and Sz\'ekelyhidi in \cite{MR3090182} and taken advantage of in~\cite{MR5008166,palasek2025arbitrary,cheskidov2025instantaneous}, among numerous convex integration constructions. The other Obukhov interaction, $N_{k-1}^\alpha X_{k-1}X_k$, is more difficult to harness, but nonetheless is easily understood as the nonlinear interaction $u_k\cdot\grad u_{k-1}$. For comparison, in~\cite{MR3486169}, there is a complicated array of nonlinear interactions with different weights that do not have any clear counterpart in the PDE nonlinearity; see Figure~\ref{fig:models}.

We remark finally that, in the inviscid setting, shell models exhibiting turbulent cascades (e.g., the KP model) suggest a different possible route toward Euler blow-up. That route is distinct from the mechanism developed here, which relies instead on strong scale separation and suppression of the high-high-low interaction.

\bibliographystyle{abbrv}
\bibliography{refs}

\end{document}

%% file: fig.tex
\begin{tikzpicture}[
    box/.style={draw, thick, rectangle, minimum width=2.5em, minimum height=1.6em, align=center},
    filled arrow/.style={thick, -{Triangle[angle=45:8pt]}},
    open arrow/.style={thick, -{Triangle[open, angle=45:8pt]}},
    line/.style={thick},
    loop/.style={
        thick,
        decoration={
            markings,
            mark=at position 0.15 with {\arrow{Triangle[angle=45:5pt]}},
            mark=at position 0.65 with {\arrow{Triangle[angle=45:5pt]}}
        },
        postaction={decorate}
    },
    lbl/.style={font=\scriptsize, text=black, inner sep=3pt} 
]

\draw[->, thick] (-1.5, 1.6) -- (-1.5, 5.4) node[pos=0.5, above, sloped] {\small frequency ($\log$ scale)};

\def\xKP{0}
\def\xObE{2.9}
\def\xObS{5.8}
\def\xTao{8.4}

\fill[gray!25] (\xObS - 1.4, -2.6) rectangle (\xObS + 1.4, -1.4);
\node[below, font=\small, text=black, align=center] at (\xObS, -2.7) {\textbf{Theorems~\ref{viscous_ns_blowup_theorem}, \ref{inviscid_euler_blowup_theorem}}};


\node[box] (K0) at (\xKP, 0) {$X_0$};
\node[box] (K1) at (\xKP, 1.4) {$X_1$};
\node[box] (K2) at (\xKP, 2.8) {$X_2$};
\node[box] (K3) at (\xKP, 4.2) {$X_3$};
\node[box] (K4) at (\xKP, 5.6) {$X_4$};
\node[box] (K5) at (\xKP, 7.0) {$X_5$};

\draw[filled arrow] (K0) -- node[lbl, left] {$N_0^\alpha$} (K1);
\draw[filled arrow] (K1) -- node[lbl, left] {$N_1^\alpha$} (K2);
\draw[filled arrow] (K2) -- node[lbl, left] {$N_2^\alpha$} (K3);
\draw[filled arrow] (K3) -- node[lbl, left] {$N_3^\alpha$} (K4);
\draw[filled arrow] (K4) -- node[lbl, left] {$N_4^\alpha$} (K5);

\node at (\xKP, 8.0) {$\vdots$};

\node[box] (OE0) at (\xObE, 0) {$X_0$};
\node[box] (OE1) at (\xObE, 1.4) {$X_1$};
\node[box] (OE2) at (\xObE, 2.8) {$X_2$};
\node[box] (OE3) at (\xObE, 4.2) {$X_3$};
\node[box] (OE4) at (\xObE, 5.6) {$X_4$};
\node[box] (OE5) at (\xObE, 7.0) {$X_5$};

\draw[open arrow] (OE0) -- node[lbl, left] {$N_0^\alpha$} (OE1);
\draw[open arrow] (OE1) -- node[lbl, left] {$N_1^\alpha$} (OE2);
\draw[open arrow] (OE2) -- node[lbl, left] {$N_2^\alpha$} (OE3);
\draw[open arrow] (OE3) -- node[lbl, left] {$N_3^\alpha$} (OE4);
\draw[open arrow] (OE4) -- node[lbl, left] {$N_4^\alpha$} (OE5);

\node at (\xObE, 8.0) {$\vdots$};

\node[box] (OS0) at (\xObS, 0) {$X_0$};
\node[box] (OS1) at (\xObS, 1.2) {$X_1$};
\node[box] (OS2) at (\xObS, 3.4) {$X_2$};
\node[box] (OS3) at (\xObS, 7.) {$X_3$};

\draw[open arrow] (OS0) -- node[lbl, left] {$N_0^\alpha$} (OS1);
\draw[open arrow] (OS1) -- node[lbl, left] {$N_1^\alpha$} (OS2);
\draw[open arrow] (OS2) -- node[lbl, left] {$N_2^\alpha$} (OS3);

\node at (\xObS, 8.) {$\vdots$};

\def\xTaoR{\xTao + 2.5} 

\node[box] (T10) at (\xTao, 0) {$X_{1,0}$};
\node[box] (T20) at (\xTaoR, 0) {$X_{2,0}$};
\node[box] (T30) at (\xTaoR, 1.0) {$X_{3,0}$};
\node[box] (T40) at (\xTao, 2.0) {$X_{4,0}$};

\draw[loop] (\xTao, 1.0) ellipse (0.15 and 0.65);
\draw[line] (\xTao + 0.15, 1.0) -- node[lbl, above, pos=0.55] {$N_0^\alpha\epsilon^{-2}$} (T30.west);

\draw[filled arrow] (T10) -- node[lbl, below] {$N_0^\alpha\epsilon$} (T20);
\draw[open arrow] (T20) -- node[lbl, right] {$N_0^\alpha\epsilon^{-1}K^{10}$} (T30);
\draw[filled arrow] (T10) -- node[lbl, above left, pos=0.2, xshift=30pt] {$N_0^\alpha\epsilon^2e^{-K^{10}}$} (T30);

\node[box] (T11) at (\xTao, 3.6) {$X_{1,1}$};
\node[box] (T21) at (\xTaoR, 3.6) {$X_{2,1}$};
\node[box] (T31) at (\xTaoR, 4.6) {$X_{3,1}$};
\node[box] (T41) at (\xTao, 5.6) {$X_{4,1}$};

\draw[filled arrow] (T40) -- node[lbl, left] {$N_1^\alpha K$} (T11);

\draw[loop] (\xTao, 4.6) ellipse (0.15 and 0.65);
\draw[line] (\xTao + 0.15, 4.6) -- node[lbl, above, pos=0.55] {$N_1^\alpha\epsilon^{-2}$} (T31.west);

\draw[filled arrow] (T11) -- node[lbl, below] {$N_1^\alpha\epsilon$} (T21);
\draw[open arrow] (T21) -- node[lbl, right] {$N_1^\alpha\epsilon^{-1}K^{10}$} (T31);
\draw[filled arrow] (T11) -- node[lbl, above left, pos=0.2, xshift=30pt] {$N_1^\alpha\epsilon^2e^{-K^{10}}$} (T31);

\node[box] (T12) at (\xTao, 7.2) {$X_{1,2}$};
\draw[filled arrow] (T41) -- node[lbl, left] {$N_2^\alpha K$} (T12);
\node at (\xTao, 8.2) {$\vdots$};

\def\yTitle{-1.0}
\def\yInv{-1.7}
\def\yVisc{-2.3}
\def\xLeft{-3.1}

\begin{scope}[every node/.style={font=\small}]

\node[anchor=west] at (\xLeft + 0.1, \yInv) {\textbf{Inviscid}};
\node[anchor=west] at (\xLeft + 0.1, \yVisc) {\textbf{Viscous}};

\node[align=center] at (\xKP, \yTitle) {\textbf{KP model}};
\node[align=center] at (\xObE, \yTitle) {\textbf{Ob.} ($N_k=\lambda^k$)};
\node[align=center] at (\xObS, \yTitle) {\textbf{Ob.} ($N_k\gg\lambda^k$)};
\node[align=center] at ({(\xTao + \xTaoR)/2}, \yTitle) {\textbf{Tao~\cite{MR3486169} model}};

\node at (\xKP, \yInv) {blow-up};
\node at (\xObE, \yInv) {regularity*};
\node at (\xObS, \yInv) {blow-up};
\node at ({(\xTao + \xTaoR)/2}, \yInv) {blow-up};

\node at (\xKP, \yVisc) {regularity*};
\node at (\xObE, \yVisc) {regularity*};
\node at (\xObS, \yVisc) {blow-up};
\node at ({(\xTao + \xTaoR)/2}, \yVisc) {blow-up};

\end{scope}

\draw[thick] (\xLeft, -1.4) -- (11.3, -1.4);
\draw (\xLeft, -2.0) -- (11.3, -2.0);
\draw[thick] (\xLeft, -2.6) -- (11.3, -2.6);
\end{tikzpicture}

%% file: main.bbl
\begin{thebibliography}{10}

\bibitem{MR2844828}
D.~Barbato, F.~Morandin, and M.~Romito.
\newblock Smooth solutions for the dyadic model.
\newblock {\em Nonlinearity}, 24(11):3083--3097, 2011.

\bibitem{MR2415066}
A.~Cheskidov.
\newblock Blow-up in finite time for the dyadic model of the {N}avier-{S}tokes equations.
\newblock {\em Trans. Amer. Math. Soc.}, 360(10):5101--5120, 2008.

\bibitem{MR4607726}
A.~Cheskidov, M.~Dai, and S.~Friedlander.
\newblock Dyadic models for fluid equations: a survey.
\newblock {\em J. Math. Fluid Mech.}, 25(3):Paper No. 62, 26, 2023.

\bibitem{cheskidov2025instantaneous}
A.~Cheskidov, M.~Dai, and S.~Palasek.
\newblock Instantaneous type {I} blow-up and non-uniqueness of smooth solutions of the {N}avier--{S}tokes equations.
\newblock {\em arXiv preprint arXiv:2511.09556}, 2025.

\bibitem{MR2337019}
A.~Cheskidov, S.~Friedlander, and N.~Pavlovi\'c.
\newblock Inviscid dyadic model of turbulence: the fixed point and {O}nsager's conjecture.
\newblock {\em J. Math. Phys.}, 48(6):065503, 16, 2007.

\bibitem{MR2600714}
A.~Cheskidov, S.~Friedlander, and N.~Pavlovi\'c.
\newblock An inviscid dyadic model of turbulence: the global attractor.
\newblock {\em Discrete Contin. Dyn. Syst.}, 26(3):781--794, 2010.

\bibitem{MR5008166}
M.~P. Coiculescu and S.~Palasek.
\newblock Non-uniqueness of smooth solutions of the {N}avier--{S}tokes equations from critical data.
\newblock {\em Invent. Math.}, 244(1):165--219, 2026.

\bibitem{MR4929621}
D.~C\'ordoba, L.~Martinez-Zoroa, and F.~Zheng.
\newblock Finite time singularities to the 3{D} incompressible {E}uler equations for solutions in {$C^\infty (\Bbb R^3 \setminus \{0\})\cap C^{1,\alpha}\cap L^2$}.
\newblock {\em Ann. PDE}, 11(2):Paper No. 19, 56, 2025.

\bibitem{MR3090182}
C.~De~Lellis and L.~Sz\'ekelyhidi, Jr.
\newblock Dissipative continuous {E}uler flows.
\newblock {\em Invent. Math.}, 193(2):377--407, 2013.

\bibitem{MR2038114}
S.~Friedlander and N.~Pavlovi\'c.
\newblock Blowup in a three-dimensional vector model for the {E}uler equations.
\newblock {\em Comm. Pure Appl. Math.}, 57(6):705--725, 2004.

\bibitem{MR3339169}
I.-J. Jeong and D.~Li.
\newblock A blow-up result for dyadic models of the {E}uler equations.
\newblock {\em Comm. Math. Phys.}, 337(2):1027--1034, 2015.

\bibitem{MR2095627}
N.~H. Katz and N.~Pavlovi\'c.
\newblock Finite time blow-up for a dyadic model of the {E}uler equations.
\newblock {\em Trans. Amer. Math. Soc.}, 357(2):695--708, 2005.

\bibitem{MR2180809}
A.~Kiselev and A.~Zlato\v{s}.
\newblock On discrete models of the {E}uler equation.
\newblock {\em Int. Math. Res. Not.}, (38):2315--2339, 2005.

\bibitem{MR3469428}
P.~G. Lemari\'e-Rieusset.
\newblock {\em The {N}avier-{S}tokes problem in the 21st century}.
\newblock CRC Press, Boca Raton, FL, 2016.

\bibitem{leray}
J.~Leray.
\newblock Sur le mouvement d'un liquide visqueux emplissant l'espace.
\newblock {\em Acta Math.}, 63(1):193--248, 1934.

\bibitem{looi}
S.~Looi.
\newblock {\em to appear}, 2026.

\bibitem{obukhov}
A.~Obukhov.
\newblock Some general properties of equations describing the dynamics of the atmosphere.
\newblock {\em Academy of Sciences, USSR, Izvestiya, Atmospheric and Oceanic Physics}, 7:471--475, 1971.

\bibitem{palasek2025arbitrary}
S.~Palasek.
\newblock Arbitrary norm growth in the 3{D} {N}avier--{S}tokes equations.
\newblock {\em arXiv preprint arXiv:2509.18595}, 2025.

\bibitem{MR4992011}
S.~Palasek.
\newblock Non-uniqueness in the {L}eray-{H}opf class for a dyadic {N}avier-{S}tokes model.
\newblock {\em Int. Math. Res. Not. IMRN}, (22):Paper No. rnaf344, 32, 2025.

\bibitem{MR3486169}
T.~Tao.
\newblock Finite time blowup for an averaged three-dimensional {N}avier-{S}tokes equation.
\newblock {\em J. Amer. Math. Soc.}, 29(3):601--674, 2016.

\bibitem{MR3702540}
T.~Tao.
\newblock On the universality of potential well dynamics.
\newblock {\em Dyn. Partial Differ. Equ.}, 14(3):219--238, 2017.

\bibitem{MR3809006}
T.~Tao.
\newblock On the universality of the incompressible {E}uler equation on compact manifolds.
\newblock {\em Discrete Contin. Dyn. Syst.}, 38(3):1553--1565, 2018.

\bibitem{MR4196152}
T.~Tao.
\newblock On the universality of the incompressible {E}uler equation on compact manifolds, {II}. {N}on-rigidity of {E}uler flows.
\newblock {\em Pure Appl. Funct. Anal.}, 5(6):1425--1443, 2020.

\bibitem{MR2231615}
F.~Waleffe.
\newblock On some dyadic models of the {E}uler equations.
\newblock {\em Proc. Amer. Math. Soc.}, 134(10):2913--2922, 2006.

\end{thebibliography}
